\newcommand{\mobs}{\ensuremath{m}}
\newcommand{\yobs}[1]{\ensuremath{y_{#1}}}
\newcommand{\Hil}{\ensuremath{\mathcal{H}}}
\newcommand{\mprob}{\ensuremath{\mathbb{P}}}
\newcommand{\eps}{\varepsilon}
\newcommand{\linop}{\Phi}
\newcommand{\ex}{\operatorname{\mathbb{E}}}
\newcommand{\reals}{\mathbb{R}}
\newcommand{\real}{\mathbb{R}}
\newcommand{\nats}{\mathbb{N}}
\newcommand{\inprod}[2]{\langle#1,#2\rangle}
\newcommand{\defn}{:=}
\newcommand{\order}{\mathcal{O}}
\newcommand{\blems}{\begin{lemma}}
\newcommand{\elems}{\end{lemma}}
\newcommand{\diag}{\operatorname{diag}}
\newcommand{\trace}{\operatorname{tr}}
\newcommand{\fhat}{\widehat{f}}
\newcommand{\Kbb}{\mathbb{K}}
\newcommand{\ball}{B}
\newcommand{\fs}{f^*}
\newcommand{\kereigf}{\psi}
\newcommand{\mnorm}[2]{|\!|\!| #1 |\!|\!|_{#2}}
\newcommand{\iprod}[2]{\langle #1, #2 \rangle}
\newcommand{\Null}{\operatorname{Ker}}
\newcommand{\psdm}[1]{\mathbb{S}^{#1}_+}
\newcommand{\modulo}[2]{(\!(#1)\!)_{#2}}
\newcommand{\sineigf}{\psi^{(s)}}
\newcommand{\coseigf}{\psi^{(c)}}
\newcommand{\qcal}{\mathcal{Q}}
\newcommand{\rlphi}{\underline{R}_\Phi}
\newcommand{\tlphi}{\underline{T}_\Phi}
\newcommand{\Lt}{{L^2}}
\newcommand{\ballh}{B_\Hil}
\newcommand{\xcal}{\mathcal{X}}
\newcommand{\nobs}{n}
\newcommand{\rp}{R_\linop}
\newcommand{\rpp}[1]{R_{#1}}
\newcommand{\tp}{T_\linop}
\newcommand{\psd}[1]{\mathbb{S}_+^{#1}}
\newcommand{\lamax}{\lambda_{\max}}
\newcommand{\lamin}{\lambda_{\min}}
\newcommand{\gcal}{\mathcal{G}}
\newcommand{\ccal}{\mathcal{C}}
\newcommand{\ccalt}{\widetilde{\mathcal{C}}}
\newcommand{\tcal}{\mathcal{T}}
\newcommand{\tcalorth}{\mathcal{T}^\perp}
\newcommand{\crad}{r}
\newcommand{\pr}{\mathbb{P}}
\newcommand{\bigmnorm}[1]{\Big|\!\Big|\!\Big|#1\Big|\!\Big|\!\Big|}
\newcommand{\onebb}{\mathbb{I}}
\newcommand{\qcalt}{\widetilde{\mathcal{Q}}}
\newcommand{\rpl}{\underline{R}_\linop}
\newcommand{\tpl}{\underline{T}_\linop}
\newcommand{\tplo}{\tpl^\circ}
\newcommand{\rpo}{R_\linop^\circ}
\newcommand{\dualf}{\mathcal{L}}
\newcommand{\dualv}{t}
\newcommand{\scal}{\mathcal{S}}
\newcommand{\conv}{\operatorname{conv}}
\newcommand{\BIL}[1]{B_{#1}}
\newcommand{\QUAD}[1]{Q_{#1}}
\newcommand{\Cpsi}{C_\psi}
\newcommand{\Ctpsi}{\widetilde{C}_\psi}
\theoremstyle{plain}
\newtheorem{theorem}{Theorem}
\newtheorem{corollary}{Corollary}
\newtheorem{lemma}{Lemma}
\theoremstyle{definition}
\newtheorem{example}{Example}
\theoremstyle{remark}
\newcommand{\plobs}{\ensuremath{n}}
\newcommand{\Xcal}{\ensuremath{\mathcal{X}}}
\newcommand{\pdim}{\ensuremath{p}}
\newcommand{\Nat}{\ensuremath{\mathbb{N}}}
\newcommand{\hilweight}{\ensuremath{\sigma}}
\newcommand{\alweight}{\ensuremath{\alpha}}
\newcommand{\ALSEQ}{\ensuremath{\{\alweight_k\}_{k=1}^\infty}}
\newcommand{\BESEQ}{\ensuremath{\{\beta_k\}_{k=1}^\infty}}
\newcommand{\MatPsi}[1]{\ensuremath{\Psi_{#1}}}
\newcommand{\PsiMat}[1]{\ensuremath{\MatPsi{#1}}}
\newcommand{\MuMat}[1]{\ensuremath{\Sigma_{#1}}}
\newcommand{\Four}{\ensuremath{\mathbb{T}_{\psi_1^\nobs}}}
\newcommand{\Xsam}{\ensuremath{x_1^\nobs}}
\newcommand{\SamOp}{\ensuremath{\mathbb{S}_{\Xsam}}}
\newcommand{\lammin}{\ensuremath{\lambda_{\operatorname{min}}}}
\newcommand{\Term}{\ensuremath{S}}
\newcommand{\Ker}{\ensuremath{\mathbb{K}}}
\newcommand{\WSamOp}{\ensuremath{\mathbb{W}_{\Xsam,\Wsam}}}
\newcommand{\wei}{\ensuremath{w}}
\newcommand{\Wsam}{\wei_1^{\nobs}}
\newcommand{\elltwo}{{\ell_2}}
\newcommand{\ellone}{{\ell_1}}
\newcommand{\Ltwo}{L^2}
\newcommand{\SEQ}[1]{\{#1_k\}}
\newcommand{\SEQL}[1]{\{#1_k\}_{k=1}^\infty} 
\newcommand{\INTOP}[1]{I_{#1}}
\newcommand{\Cspace}{C}
\newcommand{\epst}{\widetilde{\eps}}
\newcommand{\pdimt}{\widetilde{\pdim}} 
\newcommand{\nobst}{\widetilde{\nobs}}
\newcommand{\periodidx}{I_\nobs}
\newcommand{\Csig}{C_\hilweight}
\newcommand{\Ctsig}{\widetilde{C}_\hilweight}
\newcommand{\muopt}{\mu}
\newcommand{\mobssig}{\mobs_\hilweight}
\newcommand{\dcal}{\mathcal{D}} 
\newcommand{\packnum}{M}
\journal{Journal of Approximation Theory}
\begin{document}

\begin{frontmatter}



\title{Approximation properties of certain operator-induced norms
      on Hilbert spaces}


\author[eecs]{Arash A. Amini}
\ead{amini@eecs.berkeley.edu}
\author[stat,eecs]{Martin J. Wainwright}
\ead{wainwrig@stat.berkeley.edu}

\address[stat]{Department of Statistics and }
\address[eecs]{Department of Electrical Engineering and Computer Sciences
 UC Berkeley,  Berkeley, CA  94720}

\begin{abstract}
We consider a class of operator-induced norms, acting as
finite-dimensional surrogates to the $L^2$ norm, and study
their approximation properties over Hilbert subspaces
of $L^2$. The class includes, as a special case, the usual empirical norm 
encountered, for example, in the context of nonparametric regression
in reproducing kernel Hilbert spaces (RKHS). Our results have
implications to the analysis of $M$-estimators in models based on
finite-dimensional linear approximation of functions, and also to some
related packing problems.
\end{abstract}

\begin{keyword}

  $L^2$ approximation \sep Empirical norm \sep Quadratic functionals
  \sep  Hilbert spaces with reproducing kernels 
  \sep  Analysis of $M$-estimators
\end{keyword}

\end{frontmatter}


\section{Introduction}
\label{sec:intro}

Given a probability measure $\mprob$ supported on a compact set $\Xcal
\subset \reals^d$, consider the function class
\begin{align}\label{eq:L2:def}
L^2(\mprob) & \defn \big \{ f:\Xcal \rightarrow \real \, \mid\;
\|f\|_{L^2(\mprob)} < \infty \big \},
\end{align}
where $\|f\|_{L^2(\mprob)} \defn \sqrt{\int_{\Xcal} f^2(x) \, d
  \mprob(x)}$ is the usual $L^2$ norm\footnote{We also use
  $L^2(\xcal)$ or simply $L^2$ to refer to the
  space~\eqref{eq:L2:def}, with corresponding conventions for its
  norm. Also, one can take $\xcal$ to be a compact subset of any
  separable metric space and $\mprob$ a (regular) Borel measure.}
defined with respect to the measure $\mprob$.  It is often of interest
to construct approximations to this $L^2$ norm that are
``finite-dimensional'' in nature, and to study the quality of
approximation over the unit ball of some Hilbert space $\Hil$ that is
continuously embedded within $L^2$.  For example, in approximation
theory and mathematical statistics, a collection of $\plobs$ design
points in $\Xcal$ is often used to define a surrogate for the $L^2$
norm.  In other settings, one is given some orthonormal basis of
$L^2(\mprob)$, and defines an approximation based on the sum of
squares of the first $\plobs$ (generalized) Fourier coefficients.  For
problems of this type, it is of interest to gain a precise
understanding of the approximation accuracy in terms of its dimension
$\plobs$ and other problem parameters.

The goal of this paper is to study such questions in reasonable
generality for the case of Hilbert spaces $\Hil$.  We let
$\linop_\nobs : \Hil \to \reals^\nobs$ denote a continuous linear
operator on the Hilbert space, which acts by mapping any $f \in \Hil$
to the $\plobs$-vector $\begin{pmatrix} [\linop_\nobs f]_1 &
[\linop_\nobs f]_2 & \cdots & [\linop_\nobs f]_\nobs
\end{pmatrix}$.  This operator defines the
$\Phi_\nobs$-semi-norm
\begin{align}
\label{eq:phi:norm:intro}
\|f \|_{\linop_\nobs} & \defn \sqrt{\sum_{i=1}^\nobs
[\linop_\nobs f]_i^2}.
\end{align}
In the sequel, with a minor abuse of terminology,\footnote{This can be
  justified by identifying $f$ and $g$ if $\linop f = \linop g$,
  i.e. considering the quotient $\Hil / \ker \linop$.}  we refer to
$\|f\|_{\linop_\nobs}$ as the $\Phi_\nobs$-norm of $f$.  Our goal is
to study how well $\|f\|_{\linop_\nobs}$ approximates $\|f\|_{L^2}$
over the unit ball of $\Hil$ as a function of $\nobs$, and other
problem parameters.  We provide a number of examples of the
\emph{sampling operator} $\linop_\nobs$ in
Section~\ref{sec:linear:ops}.  Since the dependence on the parameter
$\nobs$ should be clear, we frequently omit the subscript to simplify
notation.

In order to measure the quality of approximation over $\Hil$, we
consider the quantity
\begin{align}
\label{eq:rp:def}
  \rp(\eps) & \defn \sup \big \{ \|f\|_\Lt^2 \, \mid \, f \in \ballh,
  \; \|f\|_\linop^2\; \le \eps^2 \big\},
\end{align}
where $B_\Hil \defn \{ f \in \Hil \, \mid \, \|f\|_\Hil \leq 1\}$ is
the unit ball of $\Hil$.  The goal of this paper is to obtain sharp
upper bounds on $\rp$.  As discussed in
Appendix~\ref{app:lower:quant}, a relatively straightforward argument
can be used to translate such upper bounds into lower bounds on the
related quantity
\begin{align}
  \tlphi(\eps) & \defn \inf \;\big\{ \|f\|_\linop^2\; \mid \; f \in
  \ballh, \; \|f\|_\Lt^2 \ge \eps^2 \big\}.
\end{align}
We also note that, for a complete picture of the relationship between
the semi-norm $\|\cdot\|_{\linop}$ and the $L^2$ norm, one can also
consider the related pair
\begin{subequations}
\begin{align}
\tp(\eps) & \defn \sup \big\{ \|f\|_\linop^2 \; \;\mid \, f \in
\ballh, \; \|f\|_\Lt^2 \le \eps^2 \big\} \label{eq:tp:def}, \quad
\mbox{and} \\
  \rlphi(\eps) \;& \defn \, \inf\; \big\{ \|f\|_\Lt^2 \; \mid \; f \in
  \ballh, \; \|f\|_\linop^2\; \ge \eps^2 \big\}.
\end{align}
\end{subequations}
Our methods are also applicable to these quantities, but we limit our
treatment to $(\rp, \tlphi)$ so as to keep the contribution focused.

Certain special cases of linear operators $\linop$, and associated
functionals have been studied in past work.  In the special case $\eps
= 0$, we have
\begin{align*}
\rp(0) & = \sup \big \{ \|f\|_\Lt^2 \, \mid \, f \in \ballh,
  \; \linop(f) = 0 \big \},
\end{align*}
a quantity that corresponds to the squared diameter of $\ballh \cap
\Null(\linop)$, measured in the $\Lt$-norm.  Quantities of this type
are standard in approximation theory (e.g.,~\cite{DeV86,Pin85,Pin86}), for
instance in the context of Kolmogorov and Gelfand widths.  Our primary
interest in this paper is the more general setting with $\eps > 0$,
for which additional factors are involved in controlling $\rp(\eps)$.
In statistics, there is a literature on the case in which $\linop$ is
a sampling operator, which maps each function $f$ to a vector of $n$
samples, and the norm $\|\cdot\|_\linop$ corresponds to the empirical
$L^2$-norm defined by these samples. When these samples are chosen
randomly, then techniques from empirical process theory~\cite{vdG00} can
be used to relate the two terms.  As discussed in the sequel, our
results have consequences for this setting of random sampling.

As an example of a problem in which an upper bound on $\rp$ is useful,
let us consider a general linear inverse problem, in which the goal is
to recover an estimate of the function $\fs$ based on the noisy
observations
\begin{align*}
  \yobs{i} = [\linop \fs]_i + w_i, \quad i =1,\dots, n,
\end{align*}
where $\{w_i\}$ are zero-mean noise variables, and $\fs \in \ballh$ is
unknown.  An estimate $\fhat$ can be obtained by solving a
least-squares problem over the unit ball of the Hilbert space---that
is, to solve the convex program
\begin{align*}
\fhat &  \defn \arg \min_{f\, \in \ballh}\; \sum_{i=1}^n (\yobs{i} -
[\linop f]_i)^2.
\end{align*}
For such estimators, there are fairly standard techniques for deriving
upper bounds on the $\linop$-semi-norm of the deviation $\fhat -\fs$.
Our results in this paper on $\rp$ can then be used to translate this
to a corresponding upper bound on the $L^2$-norm of the deviation
$\fhat - \fs$, which is often a more natural measure of performance.

As an example where the dual quantity $\tpl$ might be helpful,
consider the packing problem for a subset $\dcal \subset \ballh$ of
the Hilbert ball. Let $\packnum(\eps;\dcal,\|\cdot\|_{L^2})$ be the
$\eps$-packing number of $\dcal$ in $\| \cdot\|_{L^2}$, i.e., the
maximal number of function $f_1,\dots,f_\packnum \in \dcal$ such that
$\|f_i - f_j\|_{L^2} \ge \eps$ for all
$i,j=1,\dots,\packnum$. Similarly, let
$\packnum(\eps;\dcal,\|\cdot\|_{\linop})$ be the $\eps$-packing number
of $\dcal$ in $\|\cdot\|_{\linop}$ norm. Now, suppose that for some
fixed $\eps$, $\tpl(\eps) > 0$.  Then, if we have a collection of
functions $\{f_1,\dots, f_\packnum\}$ which is an $\eps$-packing of
$\dcal$ in $\|\cdot\|_{L^2}$ norm, then the same collection will be a
$\sqrt{\tpl(\eps)}$-packing of $\dcal$ in $\|\cdot\|_\linop$. This
implies the following useful relationship between packing numbers
\begin{align*}
  \packnum(\eps\,;\dcal,\|\cdot\|_{L^2}) \le
  \packnum(\sqrt{\tpl(\eps)}\,;\dcal,\|\cdot\|_{\linop}).
\end{align*}

The remainder of this paper is organized as follows.  We begin in
Section~\ref{SecBackground} with background on the Hilbert space
set-up, and provide various examples of the linear operators $\linop$
to which our results apply.  Section~\ref{SecMain} contains the
statement of our main result, and illustration of some its
consequences for different Hilbert spaces and linear operators.
Finally, Section~\ref{SecProofThmUpperOne} is devoted to the proofs of
our results.

\paragraph{Notation:}
For any positive integer $\pdim$, we use $\psd{\pdim}$ to denote the
cone of $\pdim \times \pdim$ positive semidefinite matrices.  For $A,B
\in \psd{p}$, we write $A \succeq B$ or $B \preceq A$ to mean $A - B
\in \psd{p}$. For any square matrix $A$, let $\lamin(A)$ and
$\lamax(A)$ denote its minimal and maximal eigenvalues,
respectively. We will use both $\sqrt{A}$ and $A^{1/2}$ to denote the
symmetric square root of $A \in \psd{p}$. We will use $\SEQ{x} =
\SEQL{x}$ to denote a (countable) sequence of objects (e.g. real-numbers and
functions). Occasionally we might denote an $\nobs$-vector as
$\{x_1,\dots,x_n\}$. The context will determine whether the elements
between braces are ordered. The symbols $\elltwo = \elltwo(\nats)$ are
used to denote the Hilbert sequence space consisting of real-valued
sequences equipped with the inner product
$\iprod{\SEQ{x}}{\SEQ{y}}_{\elltwo} \defn \sum_{k=1}^\infty x_i y_i$. The
corresponding norm is denoted as $\| \cdot \|_{\elltwo}$.


\section{Background}
\label{SecBackground}

We begin with some background on the class of Hilbert spaces of
interest in this paper and then proceed to provide some examples of
the sampling operators of interest.

\subsection{Hilbert spaces}
\label{SecHilBack}
We consider a class of Hilbert function spaces contained within
$L^2(\Xcal)$, and defined as follows. Let $\SEQL{\psi}$ be an
orthonormal sequence (not necessarily a basis) in $\Ltwo(\xcal)$ and
let $\hilweight_1 \geq \hilweight_2 \geq \hilweight_3 \geq \cdots > 0$
be a sequence of positive weights decreasing to zero.  Given these two
ingredients, we can consider the class of functions
\begin{align}
\label{EqnDefnHil}
\Hil & \defn \Big \{ f \in L^2(\mprob) \, \Big|\;\;  f =
\sum_{k=1}^\infty \sqrt{\hilweight_k} \alweight_k \psi_k, \quad
\mbox{for some $\{\alweight_k\}_{k=1}^\infty \in \elltwo(\Nat)$} \Big
\},
\end{align}
where the series in~(\ref{EqnDefnHil}) is assumed to converge in
$\Ltwo$. (The series converges since $\sum_{k=1}^\infty
(\sqrt{\hilweight_k} \alweight_k)^2 \le \hilweight_1 \| \SEQ{\alpha}
\|_{\elltwo} < \infty$.)
We refer to the sequence $\ALSEQ \in \elltwo$ as the
representative of $f$. Note that this representation is unique due to
$\hilweight_k$ being strictly positive for all $k \in \nats$. 

If $f$ and $g$ are two members of $\Hil$, say
with associated representatives $\alweight = \ALSEQ$ and $\beta =
\BESEQ$, then we can define the inner product
\begin{align}
\label{EqnDefnHilIn}
\inprod{f}{g}_{\Hil} & \defn \sum_{k=1}^\infty \alweight_k \beta_k \;
= \; \inprod{\alpha}{\beta}_{\elltwo}.
\end{align}
With this choice of inner product, it can be verified that the space
$\Hil$ is a Hilbert space. (In fact, $\Hil$ inherits all the required
properties directly from $\elltwo$.) For future reference, we note
that for two functions $f, g \in \Hil$ with associated representatives
$\alpha, \beta \in \elltwo$, their $L^2$-based inner product is given
by\footnote{In particular, for $f \in \Hil$, $\|f\|_{L^2} \le
  \sqrt{\hilweight_1} \|f\|_{\Hil}$ which shows that the inclusion
  $\Hil \subset L^2$ is continuous. } $\inprod{f}{g}_{L^2} =
\sum_{k=1}^\infty \hilweight_k \alweight_k \beta_k$.

We note that each $\psi_k$ is in $\Hil$, as it is represented by a
sequence with a single nonzero element, namely, the $k$-th element
which is equal to $\hilweight_k^{-1/2}$. It follows
from~(\ref{EqnDefnHilIn}) that $\iprod{\sqrt{\hilweight_k}
  \psi_k}{\sqrt{\hilweight_j}\psi_j}_\Hil = \delta_{kj}$. That is,
$\{\sqrt{\hilweight_k} \psi_k\}$ is an orthonormal sequence in $\Hil$.
Now, let $f \in \Hil$ be represented by $\alpha \in \elltwo$. We claim
that the series in~(\ref{EqnDefnHil}) also converges in $\Hil$
norm. In particular, $\sum_{k=1}^N \sqrt{\hilweight_k} \alpha_k
\psi_k$ is in $\Hil$, as it is represented by the sequence
$\{\alpha_1,\dots,\alpha_N,0,0,\dots\} \in \elltwo$. It follows
from (\ref{EqnDefnHilIn}) that $\| f -\sum_{k=1}^N \sqrt{\hilweight_k}
\alpha_k \psi_k\|_{\Hil} = \sum_{k=N+1}^\infty \alpha_k^2$ which
converges to $0$ as $N \to \infty$. Thus, $\{ \sqrt{\hilweight_k}
\psi_k\}$ is in fact an orthonormal basis for $\Hil$.
\\

We now turn to a special case of particular importance to us, namely
the reproducing kernel Hilbert space (RKHS) of a continuous
kernel. Consider a symmetric bivariate function $\Ker: \Xcal \times
\Xcal \rightarrow \real$, where $\xcal \subset \reals^d$ is
compact\footnote{Also assume that $\mprob$ assign positive mass to
  every open Borel subset of $\xcal$.}. Furthermore, assume $\Kbb$ to
be positive semidefinite and continuous. Consider the integral
operator $\INTOP{\Kbb}$ mapping a function $f \in L^2$ to the function
$\INTOP{\Kbb}f \defn \int \Kbb(\cdot,y) f(y) d\mprob(y)$. As a
consequence of Mercer's theorem~\cite{RieszSzNagy,Garling2007},
$I_\Kbb$ is a compact operator from $L^2$ to $\Cspace(\xcal)$, the
space of continuous functions on $\xcal$ equipped with the uniform
norm\footnote{In fact, $\INTOP{\Kbb}$ is well defined over $L^1
  \supset L^2$ and the conclusions about $\INTOP{\Kbb}$ hold as a
  operator from $L^1$ to $\Cspace(\xcal)$.}. Let $\SEQ{\hilweight}$ be
the sequence of nonzero eigenvalues of $\INTOP{\Kbb}$, which are
positive, can be ordered in nonincreasing order and converge to
zero. Let $\SEQ{\psi}$ be the corresponding eigenfunctions which are
continuous and can be taken to be orthonormal in $L^2$. With these
ingredients, the space $\Hil$ defined in equation~\eqref{EqnDefnHil}
is the RKHS of the kernel function $\Kbb$. This can be verified as
follows.

As another consequence of the Mercer's theorem, $\Kbb$ has the
decomposition
\begin{align}
  \Ker(x,y) & \defn \sum_{k=1}^\infty \hilweight_k \psi_k(x) \psi_k(y)
\end{align}
where the convergence is absolute and uniform (in $x$ and $y$). In
particular, for any fixed $y \in \xcal$, the sequence $\{
\sqrt{\sigma_k} \psi_k(y)\}$ is in $\elltwo$. (In fact,
$\sum_{k=1}^\infty (\sqrt{\sigma_k} \psi_k(y))^2 = \Kbb(y,y) <
\infty$.) Hence, $\Kbb(\cdot,y)$ is in $\Hil$, as defined
in~(\ref{EqnDefnHil}), with representative $\{\sqrt{\hilweight_k} \:
\psi_k(y) \}$. Furthermore, it can be verified that the
convergence in~(\ref{EqnDefnHil}) can be taken to be also pointwise\footnote{The
  convergence is actually even stronger, namely it is absolute and
  uniform, as can be seen by noting that $\sum_{k=n+1}^m
  |\alpha_k \sqrt{\hilweight_k} \psi_k(y)| \le (\sum_{k=n+1}^m
  \alpha_k^2)^{1/2} (\sum_{k=n+1}^m
  \hilweight_k \psi^2_k(y))^{1/2} \le (\sum_{k=n+1}^m
  \alpha_k^2)^{1/2} \max_{y \in \xcal} k(y,y)$.}. To be
more specific, for any $f \in \Hil$ with representative $\ALSEQ \in \elltwo$, we
have $f(y) = \sum_{k=1}^\infty  \sqrt{\hilweight_k}
\alpha_k \psi_k(y)$, for all $y \in \xcal$. Consequently, by definition of the inner product~\eqref{EqnDefnHilIn}, we
have
\begin{align*}
\inprod{f}{\Ker(\cdot, y)}_{\Hil} & = \sum_{k=1}^\infty
 \alpha_k \sqrt{\hilweight_k} \psi_k(y) \; = \; f(y),
\end{align*}
so that $\Ker(\cdot, y)$ acts as the representer of evaluation.  This
argument shows that for any fixed $y \in \Xcal$, the linear functional
on $\Hil$ given by $f \mapsto f(y)$ is bounded, since we have
\begin{align*}
|f(y)| & = \big| \inprod{f}{\Ker(\cdot, y)}_\Hil \big| \; \leq \;
 \|f\|_\Hil \|\Ker(\cdot, y)\|_\Hil,
\end{align*}
hence $\Hil$ is indeed the RKHS of the kernel $\Kbb$. This fact plays
an important role in the sequel, since some of the linear operators
that we consider involve pointwise evaluation.

A comment regarding the scope: our general results hold for the basic
setting introduced in equation~\eqref{EqnDefnHil}. For those examples
that involve pointwise evaluation, we assume the more refined case of
the RKHS described above.

\subsection{Linear operators, semi-norms and examples}
\label{sec:linear:ops}
Let $\linop : \Hil \to \reals^\nobs$ be a continuous linear operator,
with co-ordinates $[\linop f]_i$ for $i = 1, 2, \ldots, \nobs$.  It
defines the (semi)-inner product
\begin{align}
\inprod{f}{g}_{\linop} & \defn \inprod{\linop
f}{\linop g}_{\real^\nobs},
\end{align}
which induces the semi-norm $\|\cdot\|_\linop$.  By the Riesz
representation theorem, for each $i = 1, \ldots, \nobs$, there is a
function $\varphi_i \in \Hil$ such that $[\linop f]_i =
\iprod{\varphi_i}{f}_\Hil$ for any $f \in \Hil$.  \\
 
\noindent Let us illustrate the preceding definitions with some
examples.
\begin{example}[Generalized Fourier truncation]
\label{ExaGFT}
Recall the orthonormal basis $\{\psi_i\}_{i=1}^\infty$ underlying the
Hilbert space. Consider the linear operator $\Four: \Hil \rightarrow
\real^\nobs$ with coordinates
\begin{align}
\label{eq:freq:trunc:op}
[\Four f]_i & \defn \iprod{\psi_i}{f}_{L^2}, \quad \mbox{for $i = 1,
2, \ldots, \nobs$.}
\end{align}
We refer to this operator as the \emph{(generalized) Fourier
truncation operator,} since it acts by truncating the (generalized)
Fourier representation of $f$ to its first $\nobs$ co-ordinates.  More
precisely, by construction, if $f = \sum_{k=1}^\infty
\sqrt{\hilweight_k} \alweight_k \psi_k$, then 
\begin{equation}\label{eq:freq:tunc:comp}
[\linop f]_i = \sqrt{\hilweight_i} \alweight_i, \qquad \mbox{for $i = 1, 2, \ldots, \nobs$.}
\end{equation}
By definition of the Hilbert inner product, we have $\alweight_i =
\inprod{\psi_i}{f}_\Hil$, so that we can write $[\linop f]_i =
\inprod{\varphi_i}{f}_\Hil$, where $\varphi_i \defn
\sqrt{\hilweight_i} \psi_i$.  \hfill $\diamondsuit$
\end{example}

\begin{example}[Domain sampling]
\label{ExaSample}
A collection $\Xsam \defn \{x_1, \ldots, x_\nobs \}$ of points in the
domain $\xcal$ can be used to define the (scaled) \emph{sampling
operator} $\SamOp: \Hil \rightarrow \real^\nobs$ via
  \begin{align}
\label{eq:time:samp:op}
 \SamOp f & \defn \nobs^{-1/2} \begin{pmatrix} f(x_1) & \ldots &
f(x_\nobs ) \end{pmatrix}, \quad \mbox{for $f \in \Hil$.}
\end{align}
As previously discussed, when $\Hil$ is a reproducing kernel Hilbert
space (with kernel $\Kbb$), the (scaled) evaluation functional $f
\mapsto \nobs^{-1/2}f(x_i)$ is bounded, and its Riesz representation
is given by the function $\varphi_i = \nobs^{-1/2}
\Kbb(\cdot,x_i)$. \hfill $\diamondsuit$
\end{example}

\begin{example}[Weighted domain sampling]
Consider the setting of the previous example. A slight variation on
the sampling operator~(\ref{eq:time:samp:op}) is obtained by adding some
weights to the samples
\begin{align}
  \label{eq:w:time:samp:op}
  \WSamOp f & \defn \nobs^{-1/2} \begin{pmatrix} \wei_1 f(x_1) & \ldots &
\wei_\nobs f(x_\nobs ) \end{pmatrix}, \quad \mbox{for $f \in \Hil$.}
\end{align}
where $\Wsam=(w_1,\dots,w_\nobs)$ is chosen such that $\sum_{k=1}^\nobs \wei_k^2 =
1$. Clearly, $\varphi_i = n^{-1/2} \wei_i\,\Kbb(\cdot,x_i)$.

[As an example of how this might arise, consider approximating $f(t)$
by $\sum_{k=1}^{\nobs}f(x_k)G_\nobs(t,x_k)$ where $\{
G_\nobs(\cdot\,,x_k)\}$ is a collection of functions in
$L^2(\xcal)$ such that $\iprod{G_\nobs(\cdot\,,x_k)}{G_\nobs(\cdot\,,x_j)}_{L^2} =
\nobs^{-1} w_k^2 \,\delta_{kj}$. Proper choices of
$\{G_\nobs(\cdot,x_i)\}$ might produce better approximations to the
$L^2$ norm in the cases where one insists on choosing elements of
$x_1^\nobs$ to be uniformly spaced, while $\mprob$
in~(\ref{eq:L2:def}) is not a uniform distribution. Another slightly
different but closely related case is when one approximates $f^2(t)$
over $\xcal = [0,1]$, by say $\nobs^{-1} \sum_{k=1}^{\nobs-1} f^2(x_k)
W(\nobs(t-x_k))$ for some function $W : [-1,1] \to \reals_+$ and $x_k
= k/\nobs$. Again, non-uniform weights are obtained when $\mprob$ is
nonuniform.]

\hfill
$\diamondsuit$
\end{example}


\section{Main result and some consequences}
\label{SecMain}

We now turn to the statement of our main result, and the development
of some its consequences for various models.

\subsection{General upper bounds on  $\rp(\eps)$}
\label{SecUpperOne}

We now turn to upper bounds on $\rp(\eps)$ which was defined
previously in~(\ref{eq:rp:def}).  Our bounds are stated in terms of a
real-valued function defined as follows: for matrices $D, M \in
\psdm{\pdim}$,
\begin{align}
  \dualf(\dualv, M, D) & \defn \max \biggr \{ \lamax \big( D -\dualv
 \sqrt{D} \, M \sqrt{D} \big), \; 0 \biggr \}, \qquad \mbox{for
 $\dualv \geq 0$.}
\end{align}
Here $\sqrt{D}$ denotes the matrix square root, valid for positive
semidefinite matrices. \\

The upper bounds on $\rp(\eps)$
involve principal submatrices of certain infinite-dimensional matrices---or
equivalently linear operators on $\elltwo(\Nat)$---that we define here.
Let $\MatPsi{}$ be the infinite-dimensional matrix with entries
\begin{align}\label{eq:Psi:def}
[\MatPsi{}]_{jk} & \defn \inprod{\psi_j}{\psi_k}_\linop, \quad
\mbox{for $j, k = 1, 2, \dots $},
\end{align}
and let $\MuMat{} = \diag \{ \hilweight_1, \hilweight_2, \ldots, \}$
be a diagonal operator.  For any $\pdim = 1, 2, \ldots$, we use
$\MatPsi{\pdim}$ and $\MatPsi{\pdimt}$ to denote the principal
submatrices of $\MatPsi{}$ on rows and columns indexed by
$\{1,2,\dots,\pdim\}$ and $\{\pdim+1,\pdim+2,\dots\}$, respectively.
A similar notation will be used to denote submatrices of $\MuMat{}$.
 
\begin{theorem}
\label{ThmUpperOne}
For all $\eps \geq 0$, we have:
\begin{align}
\label{EqnRPStrong}
  \rp(\eps) &\;\le\; \inf_{\pdim \,\in\, \nats} \; \inf_{\dualv \,
 \geq \, 0} \; \Big\{ \dualf(\dualv, \MatPsi{\pdim}, \MuMat{\pdim}) +
 \dualv \, \Big( \eps + 
 \sqrt{\lamax(\MuMat{\pdimt}^{1/2} \MatPsi{\pdimt}^{} \MuMat{\pdimt}^{1/2})}\Big)^2  + \hilweight_{\pdim+1} \Big\}.
\end{align}
Moreover, for any $\pdim \in \Nat$ such that $
\lammin(\PsiMat{\pdim}) > 0$, we have
\begin{align}
\label{EqnRPWeak}
\rp(\eps) & \; \leq \; \Big(1 - \frac{\hilweight_{\pdim+1}}{\hilweight_1} \Big) \;
 \frac{1}{\lammin(\MatPsi{\pdim})} 
 \Big(  \eps +
 \sqrt{\lamax(\MuMat{\pdimt}^{1/2} \MatPsi{\pdimt}^{}
   \MuMat{\pdimt}^{1/2})} \Big)^2 + \hilweight_{\pdim+1}.
\end{align}
\end{theorem}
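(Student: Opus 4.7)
The plan is to parameterize $f \in \ballh$ by its coordinates $\alpha = \{\alpha_k\} \in \ell_2$ in the orthonormal basis $\{\sqrt{\hilweight_k}\psi_k\}$, so that $\|f\|_\Hil^2 = \|\alpha\|^2_{\ell_2}$, $\|f\|_{L^2}^2 = \alpha^\top \MuMat{}\,\alpha$, and $\|f\|_\linop^2 = \alpha^\top \sqrt{\MuMat{}}\,\MatPsi{}\,\sqrt{\MuMat{}}\,\alpha$ (the last identity following by expanding the inner product $\inprod{\cdot}{\cdot}_\linop$ against the definition~\eqref{eq:Psi:def}). Fix $\pdim$ and split $\alpha = (\beta,\gamma)$ where $\beta \in \reals^\pdim$ is the head and $\gamma \in \ell_2$ is the tail; correspondingly write $f = f_1 + f_2$. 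The $L^2$-norm decomposes orthogonally as
\begin{equation*}
\|f\|_{L^2}^2 \;=\; \beta^\top \MuMat{\pdim}\, \beta \;+\; \gamma^\top \MuMat{\pdimt}\, \gamma,
\end{equation*}
and the second term is immediately bounded by $\hilweight_{\pdim+1}\|\gamma\|^2 \le \hilweight_{\pdim+1}$ by the monotonicity of the eigenvalue sequence. This accounts for the trailing $\hilweight_{\pdim+1}$ in both bounds. It remains to control the head contribution $\beta^\top \MuMat{\pdim}\,\beta$.

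For the head, I would first transfer the $\linop$-constraint from $f$ to $f_1$ by the triangle inequality in $\reals^\nobs$: since $\|f\|_\linop \le \eps$, we get $\|f_1\|_\linop \le \eps + \|f_2\|_\linop$, and
\begin{equation*}
\|f_2\|_\linop^2 \;=\; \gamma^\top \sqrt{\MuMat{\pdimt}}\,\MatPsi{\pdimt}\,\sqrt{\MuMat{\pdimt}}\,\gamma \;\le\; \lamax\!\bigl(\MuMat{\pdimt}^{1/2}\,\MatPsi{\pdimt}\,\MuMat{\pdimt}^{1/2}\bigr)\,\|\gamma\|^2.
\end{equation*}
Call $B_\pdim$ the square root of this largest eigenvalue; a short justification is needed that the tail operator is bounded on $\ell_2$, which follows from the fact that the full operator $\sqrt{\MuMat{}}\,\MatPsi{}\,\sqrt{\MuMat{}}$ is the Gram matrix of the continuous map $\linop$ expressed in an orthonormal basis of $\Hil$, hence bounded by $\|\linop\|^2$. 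Consequently $\beta^\top \sqrt{\MuMat{\pdim}}\,\MatPsi{\pdim}\,\sqrt{\MuMat{\pdim}}\,\beta = \|f_1\|_\linop^2 \le (\eps + B_\pdim)^2$, while $\|\beta\|^2 \le \|\alpha\|^2 \le 1$.

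To obtain~\eqref{EqnRPStrong}, I apply Lagrangian relaxation to the two-constraint quadratic maximization for the head: for every $\dualv \ge 0$,
\begin{equation*}
\beta^\top \MuMat{\pdim}\, \beta \;=\; \beta^\top\bigl(\MuMat{\pdim} - \dualv\sqrt{\MuMat{\pdim}}\,\MatPsi{\pdim}\,\sqrt{\MuMat{\pdim}}\bigr)\beta + \dualv\,\beta^\top\sqrt{\MuMat{\pdim}}\,\MatPsi{\pdim}\,\sqrt{\MuMat{\pdim}}\,\beta,
\end{equation*}
and the first term is at most $\dualf(\dualv,\MatPsi{\pdim},\MuMat{\pdim})\,\|\beta\|^2 \le \dualf(\dualv,\MatPsi{\pdim},\MuMat{\pdim})$ by the definition of $\dualf$ (the truncation at zero being justified by the freedom to use the constraint $\|\beta\|^2 \le 1$ or not). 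Combining with the $\linop$-bound on the head and the tail estimate then yields~\eqref{EqnRPStrong} after taking infima over $\pdim$ and $\dualv$.

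For the weaker bound~\eqref{EqnRPWeak} I would bypass the Lagrangian and use two direct estimates on the head: on the one hand $\beta^\top \MuMat{\pdim}\,\beta \le \hilweight_1\|\beta\|^2$, and on the other, when $\lammin(\MatPsi{\pdim}) > 0$, the change of variables $w = \sqrt{\MuMat{\pdim}}\,\beta$ gives $\beta^\top \MuMat{\pdim}\,\beta = \|w\|^2 \le (\eps+B_\pdim)^2/\lammin(\MatPsi{\pdim})$. Writing $\|\beta\|^2 = \theta$ so that $\|\gamma\|^2 \le 1-\theta$, I bound
\begin{equation*}
\|f\|_{L^2}^2 \;\le\; \min\!\Bigl(\hilweight_1\,\theta,\;\tfrac{(\eps+B_\pdim)^2}{\lammin(\MatPsi{\pdim})}\Bigr) + \hilweight_{\pdim+1}(1-\theta),
\end{equation*}
and maximize over $\theta \in [0,1]$. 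The maximum occurs at $\theta_* = (\eps+B_\pdim)^2/(\hilweight_1\,\lammin(\MatPsi{\pdim}))$ (assuming this is $\le 1$; otherwise~\eqref{EqnRPWeak} is trivial), yielding exactly the $(1-\hilweight_{\pdim+1}/\hilweight_1)$ prefactor. The main obstacle I anticipate is purely bookkeeping: verifying that $\MatPsi{}$ and its tail block act as bounded self-adjoint operators on $\ell_2$ so that $\lamax$ makes sense for the infinite-dimensional tail piece, and that the convergence of the series defining $f$ permits the clean decomposition $\|f\|_{L^2}^2 = \|f_1\|_{L^2}^2 + \|f_2\|_{L^2}^2$ and the triangle inequality for $\|\cdot\|_\linop$.
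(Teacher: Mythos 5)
Your proof is correct and follows the paper's overall blueprint (pass to $\ell^2$ coordinates, truncate at level $\pdim$, control the head and tail of the quadratic forms separately), but two of the key steps are executed by a genuinely different and, in fact, cleaner route.

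Where you diverge most from the paper is in transferring the constraint $\|f\|_\linop \le \eps$ to the truncated head $f_1$. You simply observe that $\|\cdot\|_\linop$ is a semi-norm, so $\|f_1\|_\linop \le \|f\|_\linop + \|f_2\|_\linop \le \eps + B_\pdid$ where $B_\pdim = \sqrt{\lamax(\MuMat{\pdimt}^{1/2}\MatPsi{\pdimt}\MuMat{\pdimt}^{1/2})}$; the hidden cross-term $2\BIL{\linop}(\xi,\gamma)$ never needs to be named. The paper instead invokes the block PSD inequality of Appendix~\ref{app:quad:ineq}, which bounds $\QUAD{\linop}(\xi) + 2\BIL{\linop}(\xi,\gamma) + \QUAD{\linop}(\gamma)$ from below by $\rho^2 \QUAD{\linop}(\xi) - \kappa^2(1-\rho^2)^{-1}\|\gamma\|^2$ and then optimizes over $\rho^2 \in (0,1)$; after the optimization $\inf_{\rho}\{\eps^2/\rho^2 + B_\pdim^2/(1-\rho^2)\} = (\eps + B_\pdim)^2$ one lands in exactly the same place. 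Your triangle-inequality shortcut buys you a proof with no auxiliary appendix and no free parameter to tune, at the cost of being slightly less flexible: the paper's quadratic inequality can in principle exploit $\|\gamma\|^2 < 1$, though it does not do so here, so nothing is lost.

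For the weak bound~\eqref{EqnRPWeak} you are again correct but take a different packaging. Your direct scalar maximization over $\theta = \|\beta\|^2$ of $\min\{\hilweight_1\theta,\, (\eps+B_\pdim)^2/\lammin(\MatPsi{\pdim})\} + \hilweight_{\pdim+1}(1-\theta)$ is exactly the content of the paper's reduction to the Fourier truncation problem (Appendix~\ref{app:Four:details}, which in turn passes through the $2\times 2$ subproblem~\eqref{eq:2by2:subp} with $d^2 = 0$): the optimal $\theta_* = (\eps+B_\pdim)^2 / (\hilweight_1 \lammin(\MatPsi{\pdim}))$ recovers the $(1 - \hilweight_{\pdim+1}/\hilweight_1)$ prefactor, and your observation that $\theta_* > 1$ makes the bound trivially weaker than the universal bound $\rp(\eps)\le \hilweight_1$ handles the boundary case. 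This is self-contained and saves the reader a trip through two appendices. Your Lagrangian step for~\eqref{EqnRPStrong} matches the paper's argument essentially verbatim, including the observation that the truncation at zero in $\dualf$ corresponds to the choice $\beta = 0$ in the supremum. Your flagged verification that the tail block $\MuMat{\pdimt}^{1/2}\MatPsi{\pdimt}\MuMat{\pdimt}^{1/2}$ is a bounded self-adjoint operator on $\ell^2$ (so $\lamax$ makes sense) is correct and follows, as you say, from $\|f\|_\linop^2 \le \|\linop\|^2\|f\|_\Hil^2$, so the Gram matrix $\MuMat{}^{1/2}\MatPsi{}\MuMat{}^{1/2}$ is bounded and so is any principal submatrix.
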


\paragraph{Remark~(a):} These bounds cannot
be improved in general.  This is most easily seen in the special case
$\eps = 0$.  Setting $\pdim = \nobs$, bound~\eqref{EqnRPWeak}
implies that $\rp(0) \leq \hilweight_{\nobs+1}$ whenever
$\PsiMat{\nobs}$ is strictly positive definite and $\PsiMat{\nobst} = 0$.  This bound is sharp
in a ``minimax sense'', meaning that equality holds if we take the
infimum over all bounded linear operators $\linop: \Hil \rightarrow
\real^\nobs$.  In particular, it is straightforward to show that
\begin{align}\label{eq:rp:minimax}
\inf_{\substack{\linop:\; \Hil \rightarrow \real^\nobs \\ \linop
\;\text{surjective} }} \rp(0) \; &= \inf_{\substack{\linop: \;\Hil
\rightarrow \real^\nobs \\ \linop\; \text{surjective} }}\; \sup_{f \, \in\,
\ballh} \big\{ \|f\|_\Lt^2 \, \mid \; \linop f = 0 \big\} \; = \;
\hilweight_{\nobs+1},
\end{align}
and moreover, this infimum is in fact achieved by some linear
operator.  Such results are known from the general theory of
$n$-widths for Hilbert spaces (e.g., see Chapter IV in
Pinkus~\cite{Pin85} and Chapter~3 of~\cite{Gamkrelidze1990}.)

In the more general setting of $\eps > 0$, there are operators for
which the bound~\eqref{EqnRPWeak} is met with equality. As a simple
illustration, recall the (generalized) Fourier truncation operator $\Four$ from
Example~\ref{ExaGFT}.
First, it can be verified that
$\iprod{\psi_k}{\psi_j}_{\Four} = \delta_{jk}$ for $j,k \le \nobs$ and
$\iprod{\psi_k}{\psi_j}_{\Four} = 0$ otherwise. Taking $p = \nobs$, we
have $\Psi_\nobs = I_\nobs$, that is, the $\nobs$-by-$\nobs$ identity
matrix, and $\Psi_{\nobst} = 0$. Taking $\pdim = \nobs$ in~(\ref{EqnRPWeak}), it follows that for
$\eps^2 \le \hilweight_1$,
\begin{align}
    \rpp{\Four}(\eps)\; &\le\; \Big( 1 -
    \frac{\hilweight_{\nobs+1}}{\hilweight_1}\Big)\eps^2 +
    \hilweight_{\nobs + 1},\label{eq:four:rp:bound} 
\end{align}
As shown in Appendix~\ref{app:Four:details}, the
bound~\eqref{eq:four:rp:bound} in fact holds with equality. In other
words, the bounds of Theorems~\ref{ThmUpperOne} are tight in this
case. Also, note that~(\ref{eq:four:rp:bound}) implies $\rpp{\Four}(0)
\le \hilweight_{\nobs + 1}$ showing that the (generalized) Fourier
truncation operator achieves the minimax bound
of~(\ref{eq:rp:minimax}). Fig~\ref{fig:geom:fourier} provides a
geometric interpretation of these results.

  \begin{figure}[tb]
    \centering
    \includegraphics[scale=0.6]{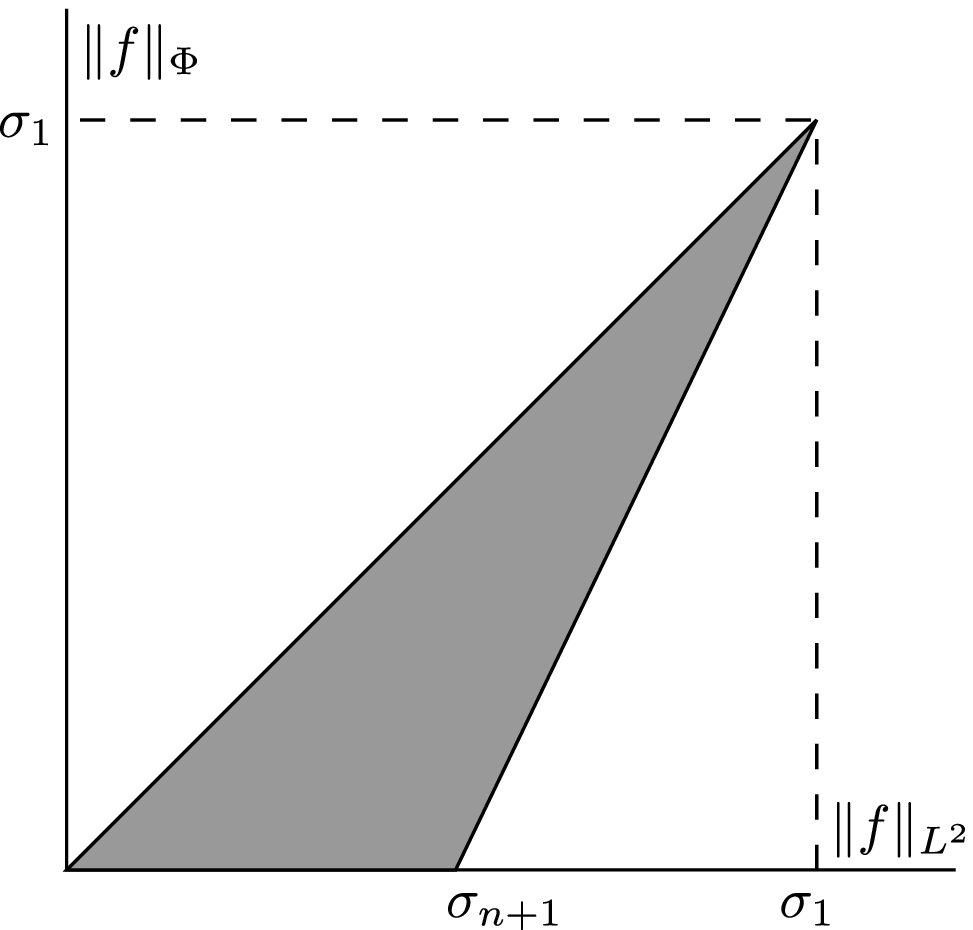}
    \caption{Geometry of Fourier truncation.
       \label{fig:geom:fourier}
    The plot shows the set $\{ (\|f\|_{L^2},\| f\|_\linop):
    \|f\|_{\Hil} \le 1\} \subset \reals^2$ for the case of (generalized) Fourier
    truncation operator $\Four$.}
  \end{figure}

\paragraph{Remark~(b):} In general, it might be difficult to obtain a bound on
$\lamax(\MuMat{\pdimt}^{1/2} \MatPsi{\pdimt}^{} \MuMat{\pdimt}^{1/2})$
as it involves the infinite dimensional matrix
$\MatPsi{\pdimt}^{}$. One may obtain a simple (although not usually
sharp) bound on this quantity by noting that for a positive
semidefinite matrix, the maximal eigenvalue is bounded by the trace,
that is,
   \begin{align}\label{eq:trace:bound}
     \lamax \big( \MuMat{\pdimt}^{1/2} \MatPsi{\pdimt}^{}
     \MuMat{\pdimt}^{1/2} \big)
     \le \trace\big(\MuMat{\pdimt}^{1/2} \MatPsi{\pdimt}^{}
     \MuMat{\pdimt}^{1/2} \big) = \sum_{k \,> \,p} \hilweight_k [\MatPsi{}]_{kk}.
   \end{align}
Another relatively easy-to-handle upper bound is
\begin{align}\label{eq:infnorm:bound}
   \lamax \big( \MuMat{\pdimt}^{1/2} \MatPsi{\pdimt}^{}
     \MuMat{\pdimt}^{1/2} \big)
     \le \mnorm{\MuMat{\pdimt}^{1/2} \MatPsi{\pdimt}^{}
     \MuMat{\pdimt}^{1/2}}{\infty} = \sup_{k \,>\, p} \sum_{r \,> \,p}
     \sqrt{\hilweight_k} \sqrt{\hilweight_r} \big| [\MatPsi{}]_{kr} \big|.
\end{align}
These bounds can be used, in combination with appropriate block
partitioning of $ \MuMat{\pdimt}^{1/2} \MatPsi{\pdimt}^{}
\MuMat{\pdimt}^{1/2}$, to provide sharp bounds on the maximal
eigenvalue. Block partitioning is useful due to the following: for a
positive semidefinite matrix 
$M =
\Big(\begin{smallmatrix}
  A_1 & C \\
  C^T & A_2
\end{smallmatrix}\Big)
$, we have $\lamax(M) \le \lamax(A_1) + \lamax(A_2)$. We leave the the
details on the application of these ideas to examples in
Section~\ref{SecSpecific}.


\subsection{Some illustrative examples}
\label{SecSpecific}

Theorem~\ref{ThmUpperOne} has a number of concrete consequences for
different Hilbert spaces and linear operators, and we illustrate a
few of them in the following subsections.

\subsubsection{Random domain sampling}

We begin by stating a corollary of Theorem~\ref{ThmUpperOne} in
application to random time sampling in a reproducing kernel Hilbert
space (RKHS).  Recall from equation~\eqref{eq:time:samp:op} the time
sampling operator $\SamOp$, and assume that the sample points
$\{x_1,\dots,x_\nobs\}$ are drawn in an i.i.d. manner according to
some distribution $\mprob$ on $\xcal$. Let us further assume that the
eigenfunctions $\psi_k$, $k \ge 1$ are uniformly bounded\footnote{One
  can replace $\sup_{x \in \xcal}$ with essential supremum with
  respect to $\mprob$.}  on $\xcal$, meaning that
  \begin{align}\label{eq:psi:uniform:bounded}
    \sup_{k \ge 1} \,\sup_{x \in \xcal} |\psi_k(x)| \le C_\psi.
  \end{align}
Finally, we assume that $ \|\hilweight\|_1 \defn \sum_{k=1}^\infty
\hilweight_k < \infty$, and that
 \begin{align}
\label{eq:random:samp:assump:sig}
\hilweight_{pk} \le \Csig \,\hilweight_{k}\, \hilweight_{p}, &\quad
\mbox{for some positive constant $\Csig$ and for all large $p$,} \\
     \textstyle \sum_{k > \pdim^\mobs} \hilweight_k \le
     \hilweight_{\pdim}, & \quad
\mbox{for some positive integer $\mobs$ and for all large $p$.} \label{eq:tail:domination}
\end{align}
Let $\mobssig$ be the smallest $\mobs$ for
which~(\ref{eq:tail:domination}) holds. 
These conditions on $\{\hilweight_k\}$ are satisfied, for example, for both a polynomial
decay $\hilweight_k = \order(k^{-\alpha})$ with $\alpha > 1$ and
an exponential decay $\hilweight_k = \order(\rho^{k})$ with $\rho \in (0,1)$.
In particular, for the polynomial decay, using the tail
bound~(\ref{eq:poly:decay:tail:bound}) in
Appendix~\ref{app:sparse:periodic}, we can take $\mobssig = \lceil
\frac{\alpha}{\alpha-1}\rceil$ to
satisfy~(\ref{eq:tail:domination}). For the exponential decay, we can
take $\mobssig = 1$ for $\rho \in (0,\frac{1}{2})$ and $\mobssig = 2$
for $\rho \in (\frac{1}{2},1)$ to
satisfy~(\ref{eq:tail:domination}).

Define the function
   \begin{align}\label{eq:complexity}
     \gcal_\nobs(\eps) := \frac{1}{\sqrt{\nobs}} \sqrt{
         \sum_{j=1}^\infty \min\{\hilweight_j,\eps^2\}},
   \end{align}
as well as the \emph{critical radius}
\begin{align}
\label{eq:crit:rad:def}
 \crad_\nobs := \inf \{ \eps > 0\,:\, \gcal_\nobs(\eps) \le \eps^2\}.
\end{align}

\begin{corollary}\label{cor:random:samp:1}
Suppose that $\crad_\nobs > 0$ and $64 \, C_\psi^2 \,\mobssig\,
\crad_\nobs^2 \log (2\nobs \crad_\nobs^2) \le 1$.  Then for any
$\eps^2 \in [\crad_\nobs^2,\hilweight_1)$, we have
\begin{align}
\pr \Big [ \rpp{\SamOp}(\eps) > (\Ctpsi + \Ctsig)\, \eps^2 \Big] & \le
2 \exp \Big( -\frac{1}{64 \, C_\psi^2\,\crad_\nobs^2}\Big),
\end{align}
where $\Ctpsi \defn 2(1+\Cpsi)^2$ and $\Ctsig \defn 3(1+\Cpsi^{-1})
\Csig \|\hilweight\|_1 + 1$.
\end{corollary}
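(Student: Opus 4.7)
The plan is to apply the bound~\eqref{EqnRPWeak} from Theorem~\ref{ThmUpperOne} at a carefully chosen truncation index $\pdim$, and then to use probabilistic arguments tailored to the sampling operator $\SamOp$ to control the two data-dependent matrix quantities that appear on the right-hand side. For $\SamOp$, the entries $[\PsiMat{}]_{jk} = \tfrac{1}{\nobs}\sum_{i=1}^{\nobs} \psi_j(x_i)\psi_k(x_i)$ have expectation $\iprod{\psi_j}{\psi_k}_{L^2(\mprob)} = \delta_{jk}$ by $L^2(\mprob)$-orthonormality of the basis, while the diagonal entries satisfy the a priori bound $[\PsiMat{}]_{kk} \le C_\psi^2$ thanks to assumption~\eqref{eq:psi:uniform:bounded}. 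I would select $\pdim = \pdim(\eps)$ to be the smallest integer with $\hilweight_{\pdim+1} \le \eps^2$; a counting argument using the definition~\eqref{eq:crit:rad:def} of $\crad_\nobs$ shows that for $\eps \ge \crad_\nobs$ one has $\pdim \eps^2 \le \sum_j \min\{\hilweight_j,\eps^2\}$, which combined with $\gcal_\nobs(\eps) \le \eps^2$ gives $\pdim \le \nobs \eps^2$.

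To control $\lammin(\PsiMat{\pdim})$, I would write $\PsiMat{\pdim} = \tfrac{1}{\nobs}\sum_{i=1}^{\nobs} \vec{\psi}(x_i)\vec{\psi}(x_i)^{T}$ with $\vec{\psi}(x) = (\psi_1(x),\ldots,\psi_{\pdim}(x))^{T}$ and apply a matrix Bernstein (or Chernoff) bound. Each rank-one summand is PSD with operator norm at most $\pdim\, C_\psi^2$, and the mean is the identity $I_\pdim$, so the deviation probability for $\{\lammin(\PsiMat{\pdim}) < 1/2\}$ decays like $\pdim \exp(-c\nobs/(\pdim\, C_\psi^2))$. The numerical hypothesis $64\,C_\psi^2\,\mobssig\,\crad_\nobs^2 \log(2\nobs\crad_\nobs^2) \le 1$, together with the preceding bound $\pdim \le \nobs\eps^2$, is exactly what drives this probability below the target threshold stated in the corollary.

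To control $\lamax(\MuMat{\pdimt}^{1/2}\PsiMat{\pdimt}\MuMat{\pdimt}^{1/2})$, I would apply the trace bound~\eqref{eq:trace:bound} and then split the resulting sum $\sum_{k>\pdim} \hilweight_k [\PsiMat{}]_{kk}$ at the cutoff $\pdim^{\mobssig}$. For the finite block $\pdim < k \le \pdim^{\mobssig}$, a Hoeffding bound combined with a union bound over at most $\pdim^{\mobssig}$ indices (using $0 \le \psi_k^2 \le C_\psi^2$) gives $[\PsiMat{}]_{kk} \le 1+\delta$ uniformly with high probability; by the product inequality~\eqref{eq:random:samp:assump:sig} and $\sum_k \hilweight_k < \infty$, the weighted finite sum is then at most $(1+\delta)\,\Csig\, \|\hilweight\|_1\, \hilweight_\pdim$. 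For the infinite tail $k > \pdim^{\mobssig}$, the \emph{deterministic} estimate $[\PsiMat{}]_{kk} \le C_\psi^2$ combined with the tail-domination assumption~\eqref{eq:tail:domination} immediately yields $\sum_{k>\pdim^{\mobssig}} \hilweight_k [\PsiMat{}]_{kk} \le C_\psi^2\, \hilweight_\pdim$. Since $\hilweight_\pdim \le \eps^2$ by the choice of $\pdim$, both contributions are $O(\eps^2)$ with constants that match those in the definitions of $\Ctpsi$ and $\Ctsig$.

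Inserting these two high-probability estimates into~\eqref{EqnRPWeak}, along with $\hilweight_{\pdim+1} \le \eps^2$, and applying a final union bound over the two events produces the claim $\rpp{\SamOp}(\eps) \le (\Ctpsi+\Ctsig)\,\eps^2$ with the stated failure probability. The main obstacle I anticipate is the spectral concentration step: getting matrix Bernstein to close at the correct sample-complexity threshold requires tracking constants carefully through the chain $\pdim \lesssim \nobs\eps^2 \lesssim 1/(C_\psi^2 \log \nobs)$, and the slightly peculiar form of the numerical hypothesis on $\crad_\nobs$ is engineered precisely to make this balance work out while still leaving the cutoff $\pdim^{\mobssig}$ small enough that~\eqref{eq:tail:domination} can absorb the infinite tail of $\{\hilweight_k\}$.
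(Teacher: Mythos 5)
Your high-level plan—apply \eqref{EqnRPWeak} at a data-driven truncation level, control $\lammin(\PsiMat{\pdim})$ by operator concentration, control the tail eigenvalue via the structure of $\PsiMat{\pdimt}$, and take a union bound—is indeed the approach the paper follows through Lemma~\ref{lem:random:samp:1}. The first half (choosing $\pdim$, $\pdim\le\nobs\eps^2$, matrix Bernstein for $\lammin(\PsiMat{\pdim})\ge\tfrac12$, and the deterministic trace bound on the infinite tail past $\pdim^{\mobssig}$) essentially matches the paper.

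However there is a genuine gap in how you control the \emph{middle} region $\pdim < k \le \pdim^{\mobssig}$. You apply the trace bound~\eqref{eq:trace:bound} to all of $\MuMat{\pdimt}^{1/2}\PsiMat{\pdimt}\MuMat{\pdimt}^{1/2}$ and then claim, via the product inequality~\eqref{eq:random:samp:assump:sig}, that
\begin{align*}
\sum_{\pdim < k \le \pdim^{\mobssig}} \hilweight_k
\;\le\; \Csig\, \| \hilweight \|_1\, \hilweight_\pdim .
\end{align*}
This is false; it overshoots by a factor of order $\pdim$. Writing any index $j$ in this range as $j = q\pdim + r$ with $q \ge 1$ and $0 \le r < \pdim$, the product inequality gives $\hilweight_j \le \hilweight_{q\pdim} \le \Csig\,\hilweight_q\,\hilweight_\pdim$, and summing over the $\pdim$ values of $r$ per block yields only $\sum_{\pdim < k \le \pdim^{\mobssig}} \hilweight_k \le \pdim\,\Csig\,\|\hilweight\|_1\,\hilweight_\pdim$. (Concretely, for $\hilweight_k \asymp k^{-\alpha}$ the left-hand side is $\asymp \pdim^{1-\alpha} = \pdim\,\hilweight_\pdim$, not $\asymp\hilweight_\pdim$.) With this extra $\pdim$ factor your final bound on $\lamax(\MuMat{\pdimt}^{1/2}\PsiMat{\pdimt}\MuMat{\pdimt}^{1/2})$ would be of order $\pdim\eps^2$ rather than $\eps^2$, and the constants $\Ctpsi$, $\Ctsig$ cannot be recovered.

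The paper avoids this by \emph{not} applying the trace bound to the whole tail. Instead it partitions $\PsiMat{\pdimt}$ block-diagonally into $\pdim^{\mobssig - 1} - 1$ blocks of size $\pdim$ plus the infinite tail, and for each finite block $\PsiSubBlk{k}$ it uses the same matrix concentration that handles $\PsiMat{\pdim}$ to obtain $\PsiSubBlk{k} \preceq \tfrac{3}{2} I_\pdim$ on the good event. The block-perturbation inequality then yields
\begin{align*}
\lamax\!\Big(\MuMat{\pdimt}^{1/2}\PsiMat{\pdimt}\MuMat{\pdimt}^{1/2}\Big)
\;\le\; \tfrac{3}{2}\sum_{k\ge 2} \hilweight_{(k-1)\pdim+1}
\;+\; C_\psi^2 \sum_{k > \pdim^{\mobssig}} \hilweight_k ,
\end{align*}
and the first sum has only one contribution per block (the largest weight), so the product inequality gives $\Csig\,\|\hilweight\|_1\,\hilweight_\pdim$ without the spurious $\pdim$ factor. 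A scalar Hoeffding bound on the diagonal entries alone cannot supply the needed spectral control on the middle blocks; you would need to run the matrix concentration argument on each block, exactly as the paper does, which is also where the $\mobssig\,\pdim\log\pdim$ term in the numerical hypothesis originates.
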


We provide the proof of this corollary in Appendix~\ref{AppRandSamp}.
As a concrete example consider a polynomial decay $\hilweight_k =
\mathcal{O}(k^{-\alpha})$ for $\alpha > 1$, which satisfies
assumptions on $\{\hilweight_k\}$. Using the tail
bound~(\ref{eq:poly:decay:tail:bound}) in
Appendix~\ref{app:sparse:periodic}, one can verify that $\crad_\nobs^2
= \mathcal{O}(\nobs^{-\alpha/(\alpha+1)})$. Note that, in this case,
\begin{align*}
  \crad_\nobs^2 \log (2 \nobs \crad_n^2)  =
  \order(\nobs^{-\frac{\alpha}{\alpha+1}} \log \nobs^{\frac{1}{\alpha +1}}) =
  \order(\nobs^{-\frac{\alpha}{\alpha+1}} \log \nobs) \to 0, \quad \nobs \to \infty.
\end{align*}
Hence conditions of 
Corollary~\ref{cor:random:samp:1} are met for sufficiently large $\nobs$. It
follows that for some constants $C_1$, $C_2$
and $C_3$, we have
   \begin{align*}
     \rpp{\SamOp}(C_1 \nobs^{-\frac{\alpha}{2(\alpha+1)}}) \le C_2 \, \nobs^{-\frac{\alpha}{\alpha+1}}
   \end{align*}
   with probability $1 - 2\exp(-C_3 \nobs^{\frac{\alpha}{\alpha+1}})$
   for sufficiently large $\nobs$.

\subsubsection{Sobolev kernel}\label{sec:sobolev:ker}

  Consider the kernel $\Kbb(x,y) = \min(x,y)$ defined on $\xcal^2$
  where $\xcal = [0,1]$. The corresponding RKHS is of Sobolev type and
  can be expressed as
  \begin{align*}
    \big\{ f \in L^{2}(\xcal)\,  \mid\, \text{$f$ is
    absolutely continuous, $f(0) = 0$ and $f' \in L^{2}(\xcal)$} \big\}.
  \end{align*}
  Also consider
  a uniform domain sampling operator $\SamOp$, that is, that
  of~(\ref{eq:time:samp:op}) with $x_i = i/\nobs, i \le n$ and let
  $\mprob$ be uniform (i.e., the Lebesgue measure restricted to $[0,1]$).

  This setting has the benefit that many interesting quantities can be
  computed explicitly, while also having some practical appeal. The
  following can be shown about the eigen-decomposition of the integral
  operator $\INTOP{\Kbb}$ introduced in Section~\ref{SecBackground},
  \begin{align*}
    \hilweight_k = \Big[ \frac{(2k-1)\pi}{2}\Big]^{-2}, \quad \psi_k(x) =
    \sqrt{2} \sin \big(\hilweight_k^{-1/2} x\big), \quad k=1,2,\dots.
  \end{align*}
  In particular, the eigenvalues decay as $\hilweight_k =
  \mathcal{O}(k^{-2})$.

  To compute the $\PsiMat{}$, we write
  \begin{align}
    [\PsiMat{}]_{kr} = \iprod{\psi_k}{\psi_r}_\linop = \frac{1}{\nobs}
    \sum_{\ell=1}^\nobs \Big\{ \cos \frac{(k-r) \ell \pi}{\nobs} -
    \cos \frac{(k+r-1)\ell \pi}{\nobs}\Big\}.
  \end{align}
  We note that $\PsiMat{}$ is periodic in $k$ and $r$ with period $2
  \nobs$. It is easily verified that $\nobs^{-1} \sum_{\ell = 1}^\nobs
  \cos(q \ell \pi /\nobs)$ is equal to $-1$ for odd values of $q$ and
  zero for even values, other than $q = 0,\pm 2\nobs, \pm 4\nobs,
  \dots$. It follows that 
  \begin{align}\label{eq:Psi:over:per:sob}
  [\PsiMat{}]_{kr} =
  \begin{cases}
    1 + \frac{1}{n} & \text{if}\; k-r = 0, \\
    -1 -\frac{1}{n} & \text{if}\; k+r = 2n+1 \\
    \frac{1}{n} (-1)^{k-r} & \; \text{otherwise} 
  \end{cases},
\end{align}
for $1 \le k,r \le 2n$. Letting $\onebb_s \in \reals^\nobs$ be the
vector with entries, $(\onebb_s)_j = (-1)^{j+1}, j \le \nobs$, we
observe that $\PsiMat{\nobs}= I_{\nobs} + \frac{1}{\nobs} \onebb_s^{}
\onebb_s^T$. It follows that $\lamin(\PsiMat{\nobs}) =
1$. It remains to bound the terms in~(\ref{EqnRPWeak}) involving the
infinite sub-block $\PsiMat{\nobst}$.

The $\PsiMat{}$ matrix of this example, given by~(\ref{eq:Psi:over:per:sob}),
shares certain properties with the $\PsiMat{}$ obtained in other
situations involving periodic eigenfunctions $\{\psi_k\}$. We abstract away these properties by introducing
a class of periodic $\PsiMat{}$ matrices.
 We call
$\PsiMat{\nobst}$ a \emph{sparse periodic} matrix, if each row (or
column) is periodic and in each period only a vanishing fraction of
elements are large. More precisely, $\PsiMat{\nobst}$ is \emph{sparse
  periodic} if there exist positive integers $\gamma$ and $\eta$, and
positive constants $c_1 $ and $c_2$, all independent of $\nobs$, such
that each row of $\PsiMat{\nobst}$ is periodic with period $\gamma
\nobs.$ and for any row $k$, there exits a subset of elements $S_k =
\{ \ell_1,\dots,\ell_\eta\} \subset \{ 1,\dots,\gamma n\}$ such that
\begin{subequations}
\begin{align}
  \big| [\PsiMat{}]_{k,n+r} \big| &\le c_1, \quad \qquad r \in S_k, \label{eq:sp:per:bnd1}\\
  \big| [\PsiMat{}]_{k,n+r} \big| &\le c_2 \,\nobs^{-1}, \;\quad r \in
  \{1,\dots,\gamma \nobs\} \setminus  S_k, \label{eq:sp:per:bnd2}
\end{align}  
\end{subequations}
The elements of $S_k$ could depend on $k$, but the cardinality of
this set should be the constant $\eta$, independent of $k$
and $\nobs$. Also, note that we are indexing rows and columns of
$\PsiMat{\nobst}$ by $\{ \nobs+1,\nobs+2,\dots\}$; in particular, $k
\ge \nobs + 1$. 
For this class, we have the
following whose proof can be found in Appendix~\ref{app:sparse:periodic}.
\begin{lemma}\label{lem:sp:per}
  Assume $\PsiMat{\nobst}$ to be sparse periodic as defined above and
  $\hilweight_k = \mathcal{O}( k^{-\alpha})$, $\alpha \ge 2$. Then,
  \begin{itemize}
  \item[(a)] for $\alpha > 2$, $\lamax \big( \MuMat{\nobst}^{1/2}
    \MatPsi{\nobst}^{} \MuMat{\nobst}^{1/2} \big) =  \mathcal{O}(
    \nobs^{-\alpha})$, $\nobs \to \infty$,
  \item[(b)] for $\alpha = 2$, $\lamax \big( \MuMat{\nobst}^{1/2}
    \MatPsi{\nobst}^{} \MuMat{\nobst}^{1/2} \big) =  \mathcal{O}(
    \nobs^{-2}\log \nobs)$, $\nobs \to \infty$.
  \end{itemize}
\end{lemma}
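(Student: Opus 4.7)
The plan is to decompose $M := \MuMat{\nobst}^{1/2} \MatPsi{\nobst} \MuMat{\nobst}^{1/2}$ according to the sparse periodic structure and separately control the pieces in operator norm, since $\lamax(M) \le \|M\|_{\mathrm{op}}$. Write $\PsiMat{\nobst} = B + S$, where $B$ collects the ``large'' entries (those indexed by the $\gamma\nobs$-shifts of the sets $S_k$) and $S$ collects the remaining ``small'' entries, so that $|S_{k,r}| \le c_2/\nobs$ everywhere and $|B_{k,r}| \le c_1$ on its support. Set $M_B := \MuMat{\nobst}^{1/2} B \MuMat{\nobst}^{1/2}$ and $M_S := \MuMat{\nobst}^{1/2} S \MuMat{\nobst}^{1/2}$, so that $M = M_B + M_S$.

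The small-entry piece is handled by a direct Frobenius estimate. Using $|S_{k,r}| \le c_2/\nobs$ together with the elementary tail bound $\sum_{k > \nobs} \hilweight_k = O(\nobs^{1-\alpha})$ (valid for $\alpha > 1$), I get $\|M_S\|_F^2 \le (c_2/\nobs)^2 \bigl(\sum_{k>\nobs} \hilweight_k\bigr)^2 = O(\nobs^{-2\alpha})$, and hence $\|M_S\|_{\mathrm{op}} \le \|M_S\|_F = O(\nobs^{-\alpha})$ uniformly for $\alpha \ge 2$.

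For $M_B$, partition the indices $\{\nobs+1,\nobs+2,\dots\}$ into consecutive blocks of size $\gamma\nobs$, with the $a$-th block $I_a := \{\nobs + (a-1)\gamma\nobs+1,\dots,\nobs + a\gamma\nobs\}$ for $a \ge 1$. On $I_a$ we have $\hilweight_k = O((a\nobs)^{-\alpha})$, and each $\gamma\nobs \times \gamma\nobs$ sub-block $B^{(a,b)}$ of $B$ has at most $\eta$ nonzero entries per row (by the sparse periodic definition) and at most $\eta$ per column (by symmetry of $\MatPsi{}$); hence $\|B^{(a,b)}\|_{\mathrm{op}} \le \sqrt{\|B^{(a,b)}\|_1 \|B^{(a,b)}\|_\infty} \le \eta c_1$, so $\|M_B^{(a,b)}\|_{\mathrm{op}} = O\bigl(\nobs^{-\alpha} (ab)^{-\alpha/2}\bigr)$. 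Writing $M_B = \sum_{d \in \ints} N^{(d)}$, where $N^{(d)}$ keeps only the blocks $(a,a+d)$, and using that the block-shift operator on $\ell^2$ is an isometry, $\|N^{(d)}\|_{\mathrm{op}} \le \sup_a \|M_B^{(a,a+d)}\|_{\mathrm{op}} = O\bigl(\nobs^{-\alpha}(1+|d|)^{-\alpha/2}\bigr)$. Summing yields $\|M_B\|_{\mathrm{op}} \le O(\nobs^{-\alpha}) \sum_{d \in \ints}(1+|d|)^{-\alpha/2}$, and the series converges for $\alpha > 2$, proving part (a).

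The case $\alpha = 2$ is the main obstacle, because the bounding series then diverges logarithmically. To repair this I truncate: let $P_L$ be the orthogonal projection onto the first $L$ blocks and split $M_B = P_L M_B P_L + (M_B - P_L M_B P_L)$. The same block-shift argument, restricted to $|d| < L$, gives $\|P_L M_B P_L\|_{\mathrm{op}} = O((\log L)/\nobs^2)$. For the tail I use Frobenius again: summing $\hilweight_k \hilweight_r c_1^2$ over large positions with $\max(k,r) > L\gamma\nobs$, and combining the uniform per-row estimate $\sum_{r:\text{large in row }k}\hilweight_r = O(\nobs^{-2})$ with $\sum_{k > L\gamma\nobs}\hilweight_k = O(1/(L\nobs))$, gives $\|M_B - P_L M_B P_L\|_F^2 = O(1/(L\nobs^3))$. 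Balancing the two contributions $(\log L)/\nobs^2$ and $1/(\nobs^{3/2}\sqrt{L})$ via the choice $L \asymp \nobs/\log^2\nobs$ makes both of order $\log\nobs/\nobs^2$, which yields $\|M_B\|_{\mathrm{op}} = O(\nobs^{-2}\log\nobs)$ and hence part (b).
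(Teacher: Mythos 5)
Your proposal is correct in outline and takes a genuinely different route from the paper. The paper proves part (a) by bounding $\lamax$ via the $\ell_\infty$--$\ell_\infty$ row-sum estimate~\eqref{eq:infnorm:bound}; for part (b) it additionally splits $\MuMat{\nobst}^{1/2}\MatPsi{\nobst}\MuMat{\nobst}^{1/2}$ at index $\nobs^2$, applies $\lamax(\cdot) \le \mnorm{\cdot}{\infty}$ to the near-diagonal block and $\lamax(\cdot)\le\trace(\cdot)$ to the tail block. You instead perform an entrywise split $\MatPsi{\nobst}=B+S$ into ``large'' and ``small'' entries; you kill $S$ with a Frobenius estimate and handle $B$ by tiling into $\gamma\nobs\times\gamma\nobs$ blocks and summing block-diagonals, which reduces the $\alpha>2$ case to convergence of $\sum_d(1+|d|)^{-\alpha/2}$. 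For $\alpha=2$ you replace the divergent block-diagonal sum by truncating to the first $L$ block-diagonals, bound the remainder in Frobenius norm, and optimize $L$; the paper instead changes the norm used on the far block to the trace. Both get the same rates; your decomposition is arguably more transparent about where $\log\nobs$ enters, while the paper's is shorter because it avoids the block-diagonal combinatorics.

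One point you should tighten: as written, $B$ is defined by the row condition ``column index lies in a $\gamma\nobs$-shift of $S_k$.'' With that definition each row of a block has at most $\eta$ nonzeros, but columns need not, since $S_k$ varies with $k$. To make the $\sqrt{\|\cdot\|_1\|\cdot\|_\infty}$ bound legitimate you need $B$ to have at most $\eta$ nonzeros per row \emph{and} per column in each block. The fix is to define $B$ by the \emph{intersection} of the row and column membership conditions, i.e.\ put $(k,r)$ into the support of $B$ only when $(r-\nobs)\bmod\gamma\nobs\in S_k$ and $(k-\nobs)\bmod\gamma\nobs\in S_r$; then $B$ is symmetric with $\le\eta$ per row and per column, and every entry left in $S$ is bounded by $c_2/\nobs$ because whichever membership fails, the corresponding instance of~\eqref{eq:sp:per:bnd2} (applied to row $k$ or, via $\MatPsi{}$'s symmetry, to row $r$) forces the entry to be small. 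With this adjustment your argument goes through as stated.
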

In particular~(\ref{eq:Psi:over:per:sob}) implies that $\PsiMat{\nobst}$
is sparse periodic  with
parameters $\gamma = 2$, $\eta = 2$, $c_1 = 2$ and $c_2 = 1$. Hence,
part~(b) of Lemma~\ref{lem:sp:per} applies. Now, we can
use~(\ref{EqnRPWeak}) with $\pdim = \nobs$  to obtain
\begin{align}
  \rpp{\SamOp}(\eps) \le 2 \eps^2 + \mathcal{O} \big( \nobs^{-2}
  \log \nobs\big)
\end{align}
where we have also used $(a + b)^2 \le 2 a^2 + 2b^2$.
  
\begin{figure}[tb]
  \centering
  \begin{minipage}[h]{0.4\linewidth}
      \centering
      \includegraphics[scale=0.5]{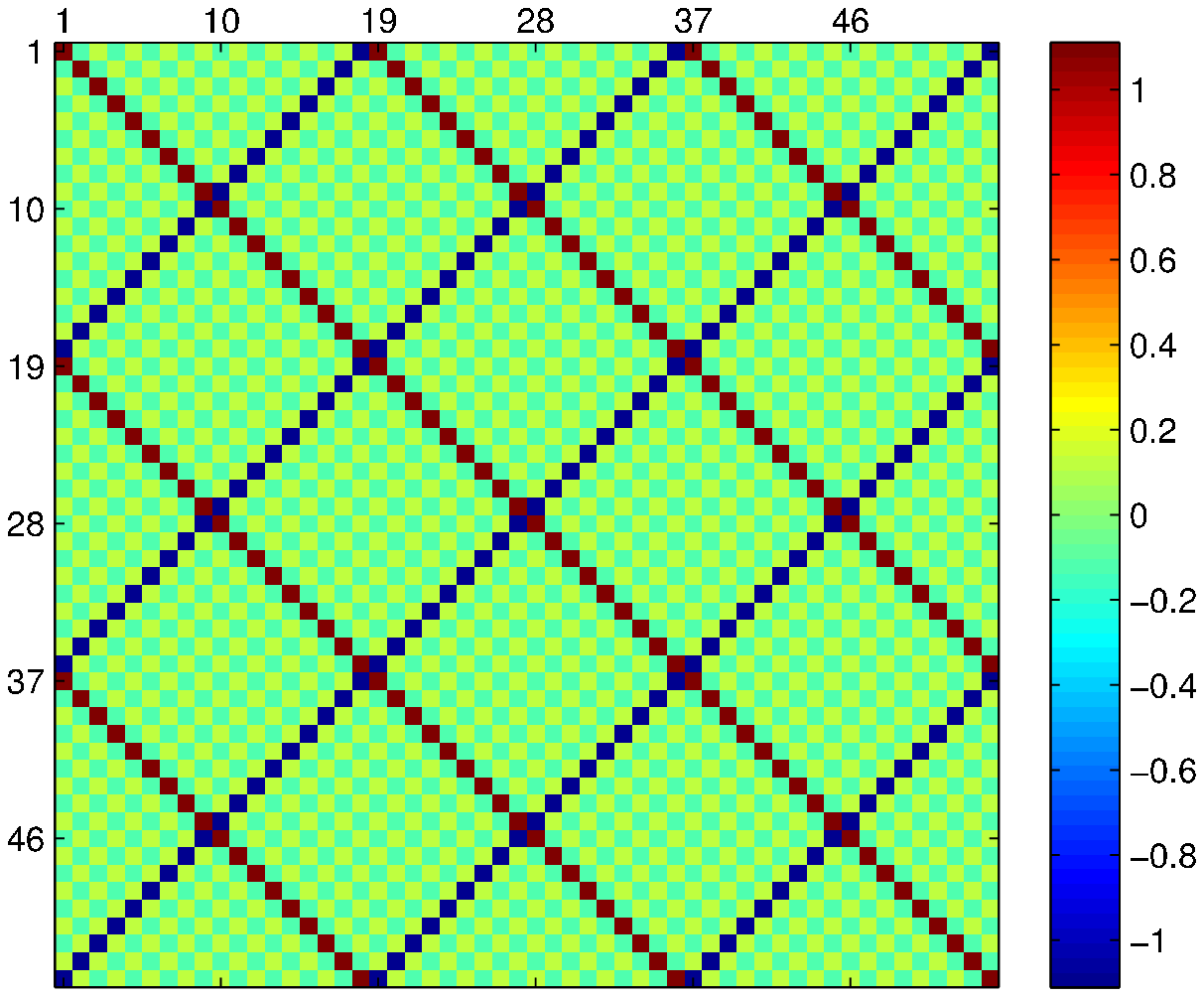} \\
      (a)
  \end{minipage} 
  \hspace{4ex}
  \begin{minipage}[h]{0.4\linewidth}
      \centering
      \includegraphics[scale=0.5]{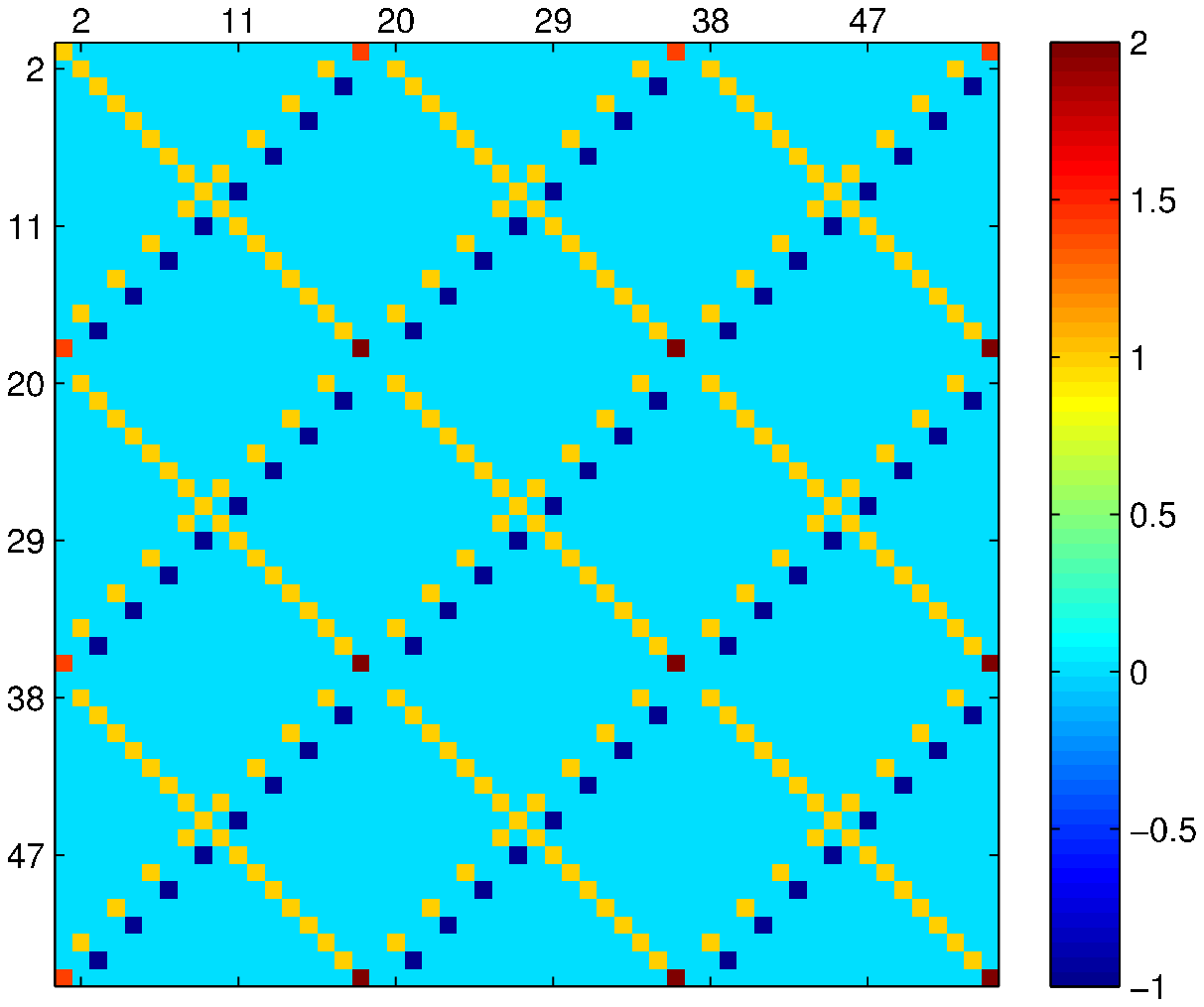} \\
      (b)
  \end{minipage}
  \caption{Sparse periodic $\PsiMat{}$
    matrices. Display~(a) is a plot of the $N$-by-$N$ leading
    principal submatrix of $\PsiMat{}$ for the Sobolev
    kernel $(s,t) \mapsto \min\{s,t\}$. Here $\nobs = 9$ and $N =
    6\nobs$; the period is $2\nobs = 18$. Display~(b) is a the same
    plot for a Fourier-type kernel. The plots exhibit sparse periodic
    patterns as defined in Section~\ref{sec:sobolev:ker}. \label{fig:sp:per}}
\end{figure}

\subsubsection{Fourier-type kernels}
  In this example, we consider an RKHS of functions on $\xcal = [0,1]
  \subset \reals$, generated by a \emph{Fourier-type} kernel defined
  as $\Kbb(x,y) := \kappa(x-y)$, $x,y \in [0,1]$, where
  \begin{align}\label{eq:trig:series}
    \kappa(x) = \zeta_0 + \sum_{k=1}^\infty 2 \zeta_k \cos(2 \pi k x),
    \quad x \in [-1,1].
  \end{align}
  We assume that $(\zeta_k)$ is a $\reals_+$-valued nonincreasing
  sequence in $\ellone$, i.e. $\sum_k \zeta_k < \infty$. Thus, the
  trigonometric series in~(\ref{eq:trig:series}) is absolutely (and
  uniformly) convergent. As for the operator $\linop$, we consider the
  uniform time sampling operator $\SamOp$, as in the previous
  example. That is, the operator defined in~(\ref{eq:time:samp:op})
  with $x_i = i/\nobs, i \le n$. We take $\mprob$ to be uniform.

  This setting again has the benefit of being simple enough to allow
  for explicit computations while also practically important. One can
  argue that the eigen-decomposition of the kernel integral operator
  is given by
  \begin{align}
    \kereigf_1 &= \coseigf_0, \quad \kereigf_{2k} = \coseigf_k,
  \quad 
  \kereigf_{2k+1} = \sineigf_k, \quad k \ge 1\\
  \hilweight_1 &= \zeta_0, \quad\; \;\; \hilweight_{2k} = \zeta_{k}, 
 \;\;\; \quad \hilweight_{2k+1} = \zeta_{k}, \quad \;\;\;k \ge 1
  \end{align}
  where $\coseigf_0(x) := 1$, $\coseigf_k(x) := \sqrt{2} \cos(2\pi k
  x)$ and $\sineigf_k(t) := \sqrt{2} \sin(2\pi k x)$ for $k \ge 1$.

  For any integer $k$, let $\modulo{k}{\nobs}$ denote $k$ modulo
  $\nobs$. Also, let $k \mapsto \delta_k$ be the function defined over
  integers which is $1$ at $k = 0$ and zero elsewhere. Let $\iota
  \defn \sqrt{-1}$. Using the identity $\nobs^{-1} \sum_{\ell=1}^\nobs
  \exp(\iota 2 \pi k \ell / \nobs) = \delta_{\modulo{k}{\nobs}}$, one
  obtains the following, 
\begin{subequations}
  \begin{align}
    \iprod{\coseigf_k}{\coseigf_j}_\linop &= \big[
      \delta_{\modulo{k-j}{\nobs}} + \delta_{\modulo{k+j}{\nobs}}\big]
    \Big(\frac{1}{\sqrt{2}}\Big)^{\delta_k + \delta_j},
    \\ \iprod{\sineigf_k}{\sineigf_j}_\linop &=
    \delta_{\modulo{k-j}{\nobs}} - \delta_{\modulo{k+j}{\nobs}},
    \\ \iprod{\coseigf_k}{\sineigf_j}_\linop &= 0, \qquad \mbox{ valid
      for all $j,k \ge 0$.}
  \end{align}
\end{subequations}
 It follows that $\PsiMat{\nobs} = I_\nobs$ if
  $\nobs$ is odd and $\Psi_\nobs = \diag\{1,1,\dots,1,2\}$ if $\nobs$
  is even. In particular, $\lamin(\PsiMat{\nobs}) = 1$ for all $\nobs
  \ge 1$. It is also clear that the principal submatrix of $\PsiMat{}$
  on indices $\{2,3,\dots\}$ has periodic rows and columns with period
  $2 \nobs$. If follows that $\PsiMat{\nobs}$ is sparse periodic as
  defined in Section~\ref{sec:sobolev:ker} with parameters $\gamma = 2$,
  $\eta = 2$, $c_1 = 2$ and $c_2 = 0$.

  Suppose for example that the eigenvalues decay polynomially, say as
  $\zeta_k = \mathcal{O}(k^{-\alpha})$ for $\alpha >2$. Then,
  applying~(\ref{EqnRPWeak}) with $\pdim = \nobs$, in
  combination with Lemma~\ref{lem:sp:per}~part~(a), we get
  \begin{align}
    \rpp{\SamOp}(\eps) \le 2 \eps^2 + \mathcal{O}(\nobs^{-\alpha}).
  \end{align}
  As another example, consider the exponential decay $\zeta_k =
  \rho^k$, $k \ge 1$ for some $\rho \in (0,1)$, which corresponds to
  the Poisson kernel. In this case, the tail sum of $\{\hilweight_k\}$
  decays as the sequence itself, namely, $\sum_{k > \nobs} \hilweight_k \le 2
  \sum_{k > \nobs} \rho^k = \frac{2\rho}{1-\rho} \rho^k$. Hence, we
  can simply use the trace bound~(\ref{eq:trace:bound}) together
  with~(\ref{EqnRPWeak}) to obtain
  \begin{align}
    \rpp{\SamOp}(\eps) \le 2 \eps^2 + \mathcal{O}(\rho^{\nobs}).
  \end{align}

\section{Proof of Theorem~\ref{ThmUpperOne} }
\label{SecProofThmUpperOne}
We now turn to the proof of our main theorem.  Recall from
Section~\ref{SecHilBack} the correspondence between any $f \in \Hil$
and a sequence $\alpha \in \elltwo$; also, recall the diagonal
operator $\MuMat{}:\elltwo \rightarrow \elltwo$ defined by the matrix
$\diag \{ \hilweight_1, \hilweight_2, \ldots \}$.  Using the
definition of~(\ref{eq:Psi:def}) of the $\PsiMat{}$ matrix, we have
\begin{align*}
   \|f \|_\linop^2 & = \iprod{\alpha}{ \MuMat{}^{1/2} \MatPsi{}
   \MuMat{}^{1/2}\alpha}_{\elltwo},
\end{align*}
By definition~\eqref{EqnDefnHil} of the
Hilbert space $\Hil$, we have $\|f\|_\Hil^2 = \sum_{k=1}^\infty
\alpha_k^2$ and $\|f\|_{L^2}^2 = \sum_k \hilweight_k \alpha_k^2$.
Letting $\ball_{\elltwo} = \big \{ \alpha \in \elltwo \, \mid \,
\|\alpha\|_{\elltwo} \le 1 \big\}$ be the unit ball in
$\elltwo$, we conclude that $\rp$ can be written as
\begin{align}
\label{eq:rp:alpha:1}
\rp(\eps) & = \sup_{\alpha \,\in\, \ball_{\elltwo}} \big\{ Q_2(\alpha) \,
\mid \, Q_\linop(\alpha) \leq \eps^2 \big\},
\end{align}
where we have defined the quadratic functionals
\begin{align}\label{eq:Q2:QPhi:defs}
  Q_2(\alpha) \defn \iprod{\alpha}{\MuMat{} \alpha}_{\elltwo}, \quad
  \mbox{and} \quad Q_\linop(\alpha) \defn
  \iprod{\alpha}{\MuMat{}^{1/2} \MatPsi{} \MuMat{}^{1/2}
  \alpha}_{\elltwo}.
\end{align}
Also let us define the symmetric bilinear form
\begin{align}
  \BIL{\linop}(\alpha,\beta) \defn \iprod{\alpha}{\MuMat{}^{1/2}
    \MatPsi{} \MuMat{}^{1/2} \beta}_{\elltwo}, \quad \alpha, \beta \in \ell^2,
\end{align}
whose diagonal is $\BIL{\linop}(\alpha,\alpha) = \QUAD{\linop}(\alpha)$.

We now upper bound $\rp(\eps)$ using a truncation argument.  
Define the set
\begin{align}
  \ccal & \defn \{ \alpha \in \ball_{\elltwo}\,\mid\, Q_\linop(\alpha) \le
  \eps^2\},
\end{align}
corresponding to the feasible set for the optimization
problem~\eqref{eq:rp:alpha:1}.  For each integer $\pdim = 1, 2,
\ldots$, consider the following truncated sequence spaces
\begin{align*}
  \tcal_\pdim & \defn \big \{ \alpha \in \elltwo \, \mid \, \alpha_i =
  0, \quad \mbox{for all $i > p$} \big \}, \quad \mbox{and} \\
\tcalorth_\pdim & \defn \big\{ \alpha \in \elltwo \, \mid \, \alpha_i =
  0, \quad \mbox{for all $i = 1,2, \ldots \pdim$} \big \}.
\end{align*}
Note that $\elltwo$ is the direct sum of $\tcal_\pdim$ and
$\tcalorth_\pdim$. Consequently, any fixed $\alpha \in \ccal$ can be
decomposed as $\alpha = \xi + \gamma$ for some (unique) $ \xi \in
\tcal_p$ and $\gamma \in \tcalorth_p$.  Since $\MuMat{}$ is a diagonal
operator, we have
\begin{align*}
 Q_2(\alpha) & = Q_2(\xi)+ Q_2(\gamma).
\end{align*}
Moreover, since any $\alpha \in \ccal$ is feasible for the
optimization problem~\eqref{eq:rp:alpha:1}, we have
\begin{align}
\label{eq:quad:feasib}
\QUAD{\linop}(\alpha) \, = \, Q_\linop(\xi) + 
2 \BIL{\linop}(\xi,\gamma) + \QUAD{\linop}(\gamma) \; \le \; \eps^2.
\end{align}
Note that since $\gamma \in \tcal_\pdim^\perp$, it can be written as
$\gamma = (0_\pdim, c)$, where $0_\pdim$ is a vector of $\pdim$
zeroes, and $c = (c_1, c_2, \ldots) \in \elltwo$. Similarly, we can
write $\xi =(x,0)$ where $x \in \reals^\pdim$. Then, each of the terms
$\QUAD{\linop}(\xi)$, $\BIL{\linop}(\xi,\gamma)$,
  $\QUAD{\linop}(\gamma)$ can be expressed in terms of block
  partitions of $\MuMat{}^{1/2} \MatPsi{} \MuMat{}^{1/2}$.  For example,
\begin{align}
  \QUAD{\linop}(\xi) = \iprod{x}{A x}_{\real^p}, \quad \QUAD{\linop}(\gamma) = 
  \iprod{y}{ D y}_{\elltwo},
\end{align}
where $A \defn \MuMat{\pdim}^{1/2}\MatPsi{\pdim}
\MuMat{\pdim}^{1/2}$ and $D \defn \MuMat{\pdimt}^{1/2}\MatPsi{\pdimt}
\MuMat{\pdimt}^{1/2}$, in correspondence with the block partitioning
notation of Appendix~\ref{app:quad:ineq}. We now apply inequality~(\ref{eq:quad:ineq}) derived in
Appendix~\ref{app:quad:ineq}. Fix some $\rho^2 \in (0,1)$ and take
\begin{align}
  \kappa^2 \defn \rho^2 \lamax(\MuMat{\pdimt}^{1/2} \MatPsi{\pdimt}
  \MuMat{\pdimt}^{1/2}),
\end{align}
so that condition~(\ref{eq:quad:ineq:cond:2}) is
satisfied. Then,~(\ref{eq:quad:ineq}) implies
\begin{align}\label{eq:quad:ineq:applied}
  \, Q_\linop(\xi) + 
2 \BIL{\linop}(\xi,\gamma) + \QUAD{\linop}(\gamma) \ge \rho^2 \QUAD{\linop}(\xi) -
  \frac{\kappa^2}{1-\rho^2} \|\gamma\|_2^2.
\end{align}
Combining~(\ref{eq:quad:feasib}) and~(\ref{eq:quad:ineq:applied}), we
obtain
\begin{align}
  \QUAD{\linop}(\xi) \le \frac{\eps^2}{\rho^2} +
  \frac{\lamax(\MuMat{\pdimt}^{1/2} \MatPsi{\pdimt}
    \MuMat{\pdimt}^{1/2})}{1-\rho^2} \|\gamma\|_2^2.
\end{align}
We further note that $\|\gamma\|_2^2 \le \|\gamma\|_2^2 + \|\xi\|_2^2
= \|\alpha\|_2^2 \le 1$. It follows that
\begin{align}
  \QUAD{ \linop }(\xi)\; \le\; \epst^2, \quad \text{where} \quad
  \epst^2 \defn \frac{\eps^2}{\rho^2} +
  \frac{\lamax(\MuMat{\pdimt}^{1/2} \MatPsi{\pdimt}
    \MuMat{\pdimt}^{1/2})}{1-\rho^2}.
\end{align}

Let us define 
\begin{align}
  \ccalt & \defn \{ \xi \in \ball_{\elltwo} \cap \tcal_\pdim \,\mid\,
  Q_\linop(\xi) \le \epst^2\}.
\end{align}
Then, our arguments so far show that for $\alpha \in \ccal$,
\begin{align}\label{eq:first:bound:on:QUAD2}
  \QUAD{2}(\alpha) = \QUAD{2}(\xi) + \QUAD{2}(\gamma) \; \le \;
  \underbrace{ \sup_{\xi \, \in\, \ccalt}
    \;\QUAD{2}(\xi) }_{\Term_\pdim} \;+ \underbrace{\sup_{\gamma \,
      \in\, \ball_{\elltwo} \cap \tcalorth_\pdim}
    \;\QUAD{2}(\gamma)}_{\Term_\pdim^\perp}.
\end{align}
Taking the supremum over $\alpha \in \ccal$ yields the upper bound
\begin{align*}
\rp(\eps) & \leq \Term_\pdim + \Term_\pdim^\perp.
\end{align*}

It remains to bound each of the two terms on the right-hand side.
Beginning with the term $\Term_\pdim^\perp$ and recalling the
decomposition $\gamma = (0_\pdim, c)$, we have $Q_2(\gamma) =
\sum_{k=1}^\infty \hilweight_{k+\pdim} c_k^2$, from which it follows
that
\begin{align*}
\Term_\pdim^\perp & =\sup \Big\{ \sum_{k=1}^\infty
\hilweight_{k+\pdim}\, c_k^2 \; \mid \; \sum_{k=1}^\infty c_k^2 \le
1\Big\} \; = \; \hilweight_{\pdim+1},
\end{align*}
since $\{\hilweight_{k}\}_{k=1}^\infty$ is a nonincreasing sequence by
assumption.

We now control the term $\Term_\pdim$.  Recalling the decomposition
$\xi = (x, 0)$ where $x \in
\reals^\pdim$, we have
\begin{align*}
  \Term_\pdim = \sup_{\xi\, \in\, \ccalt} \, Q_2(\xi) & = \sup
  \big\{ \iprod{x}{\MuMat{\pdim} \,x} \,:\, \iprod{x}{x} \le 1,\;
  \iprod{x}{\MuMat{\pdim}^{1/2} \PsiMat{\pdim} \MuMat{\pdim}^{1/2} \,x}
  \le \epst^2\big\} \\ 
& = \sup_{\iprod{x}{x} \,\le\, 1} \, \inf_{t \, \geq \, 0} \big\{
\iprod{x}{\MuMat{\pdim} x} + t \big (\epst^2 - \iprod{x}{\MuMat{\pdim}^{1/2}
\PsiMat{\pdim} \MuMat{\pdim}^{1/2}\, x} \big) \big\} \\
& \stackrel{(a)}{\leq} \inf_{t \, \geq \, 0} \big\{ \sup_{\iprod{x}{x}
\,\le\, 1} \iprod{x}{\MuMat{\pdim}^{1/2}(I_p - t \PsiMat{\pdim})
\MuMat{\pdim}^{1/2} \,x} +t \,\epst^2 \big\}
\end{align*}
where inequality (a) follows by Lagrange (weak) duality. It is not
hard to see that for any symmetric matrix $M$, one has
\begin{align*}
 \sup \big\{ \iprod{x}{M x} \, :\, \iprod{x}{x} \le 1\big\} & = \max
  \big \{ 0, \lamax(M) \big \}.
\end{align*}
Putting the pieces together and optimizing over $\rho^2$, noting that
\begin{align*}
  \inf_{r \in (0,1)} \Big\{ \frac{a}{r} + \frac{b}{1-r}
\Big\} = (\sqrt{a} + \sqrt{b})^2
\end{align*}
for any $a, b > 0$,
completes the proof of the
bound~\eqref{EqnRPStrong}. \\

We now prove bound~\eqref{EqnRPWeak}, using the same decomposition
and notation established above, but writing an upper bound on
$\QUAD{2}(\alpha)$slightly different
form~(\ref{eq:first:bound:on:QUAD2}). In particular, the argument
leading to~(\ref{eq:first:bound:on:QUAD2}), also shows that
\begin{align}
  \rp(\eps) \le \sup_{\xi \, \in \, \tcal_\pdim, \; \gamma \, \in \,
    \tcalorth_\pdim} \big\{ \QUAD{2}(\xi) + \QUAD{2}(\gamma)\; \mid \;
  \xi + \gamma \in \ball_{\elltwo}, \; \QUAD{\linop}(\xi) \le \epst^2
  \big\}.
\end{align}
Recalling the expression~(\ref{eq:Q2:QPhi:defs}) for $\QUAD{\linop}(\xi)$
and noting that $\PsiMat{\pdim} \succeq \lammin(\PsiMat{\pdim})
I_\pdim$ implies $A = \MuMat{\pdim}^{1/2} \PsiMat{\pdim}
\MuMat{\pdim}^{1/2} \succeq \lammin(\PsiMat{\pdim}) \MuMat{\pdim}$, we
have
\begin{align}
  Q_\linop(\xi) \;\ge \;\lammin(\PsiMat{\pdim}) \,Q_2(\xi).
\end{align}
Now, since we are assuming $\lammin(\PsiMat{\pdim}) > 0$, we have
\begin{align}
  \rp(\eps) \le \sup_{ \xi
      \, \in \, \tcal_\pdim, \; \gamma \, \in \, \tcalorth_\pdim } 
  \Big\{ \QUAD{2}(\xi) + \QUAD{2}(\gamma)\;\; \Big| \;\;
  \xi + \gamma \in \ball_{\elltwo}, \; \QUAD{2}(\xi) \le
  \frac{\epst^2}{\lammin(\PsiMat{\pdim})} \Big\}.
\end{align}
The RHS of the above is an instance of the Fourier truncation problem
with $\eps^2$ replaced with $\epst^2 / \lammin(\PsiMat{\pdim})$. That
problem is workout in detail in Appendix~\ref{app:Four:details}. In
particular, applying equation~(\ref{eq:Four:exact:appendix}) in
Appendix~\ref{app:Four:details} with $\eps^2$ changed to $\epst^2 /
\lammin(\PsiMat{\pdim})$ completes the proof of~\eqref{EqnRPWeak}.
Figure~\ref{fig:proof:geom} provides a graphical representation of the
geometry of the proof.

\begin{figure}[tb]
  \centering
  \begin{minipage}[h]{0.4\linewidth}
      \centering
      \includegraphics[scale=0.5]{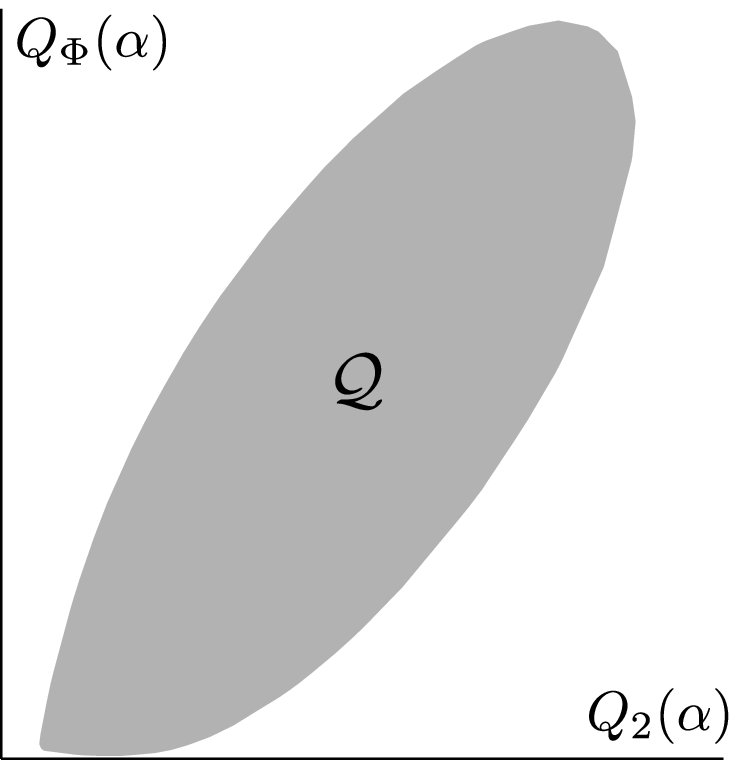} \\
      (a)
  \end{minipage} 
  \begin{minipage}[h]{0.4\linewidth}
      \centering
      \includegraphics[scale=0.5]{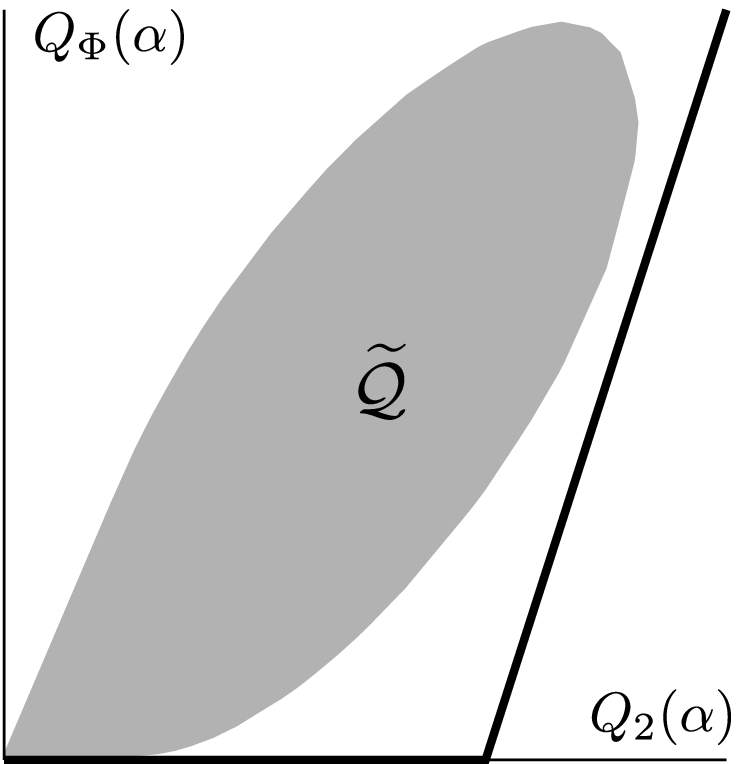} \\
      (b)
  \end{minipage}
  \caption{Geometry of the proof of~(\ref{EqnRPWeak}). Display~(a) is
    a plot of the set $\qcal := \{ (Q_2(\alpha),
    Q_\linop(\alpha))\,:\, \|\alpha\|_{\elltwo} = 1\} \subset
    \reals^2$. This is a convex set as a consequence of
    Hausdorff-Toeplitz theorem on convexity of the numerical range and
    preservation of convexity under projections. Display~(b) shows the
    set $\qcalt:=\text{conv}(0,\qcal)$, i.e., the convex hull of
    $\{0\} \cup \qcal$. Observe that
    $\rp(\eps) = \sup \{ x\, : (x,y) \in \qcalt,\; y \le
    \eps^2\}$. For any fixed $r \in (0,1)$, the bound
    of~(\ref{EqnRPWeak}) is a piecewise linear approximation to one
    side of $\qcalt$ as shown in Display~(b).\label{fig:proof:geom}}
\end{figure}


\section{Conclusion}

We considered the problem of bounding (squared) $L^2$ norm of 
functions in a Hilbert unit ball, based on restrictions on an operator-induced norm acting as a surrogate for the $L^2$ norm. In particular, given that
$f \in \ball_\Hil$ and $\|f\|_\linop^2 \le \eps^2$, our results enable
us to obtain, by estimating norms of certain finite and infinite
dimensional matrices, inequalities of the form
\begin{align*}
  \|f\|_{L^2}^2 \le c_1 \eps^2 + h_{\linop,\Hil}(\hilweight_\nobs)
\end{align*}
where $\{\hilweight_\nobs\}$ are the eigenvalues of the operator
embedding $\Hil$ in $L^2$, $h_{\linop,\Hil}(\cdot)$ is an increasing
function (depending on $\linop$ and $\Hil$) and $c_1 \ge 1$ is some
constant. We considered examples of operators $\linop$ (uniform time
sampling and Fourier truncation) and Hilbert spaces $\Hil$ (Sobolev,
Fourier-type RKHSs) and showed that it is possible to obtain optimal
scaling $h_{\linop,\Hil}(\hilweight_\nobs) =
\mathcal{O}(\hilweight_\nobs)$ in most of those cases. We also
considered random time sampling, under polynomial eigen-decay
$\hilweight_\nobs = \mathcal{O}(n^{-\alpha})$, and effectively showed
that $h_{\linop,\Hil}(\hilweight_\nobs) =
\mathcal{O}(n^{-\alpha/(\alpha+1)})$ (for $\eps$ small enough), with
high probability as $n \to \infty$. This last result complements those
on related quantities obtained by techniques form empirical process
theory, and we conjecture it to be sharp.

\subsection*{Acknowledgements}
AA and MJW were partially supported by NSF Grant CAREER-CCF-0545862 
and AFOSR Grant 09NL184.

\appendix

\section{Analysis of random time sampling}\label{AppRandSamp}

This section is devoted to the proof of
Corollary~\ref{cor:random:samp:1} on random time sampling in
reproducing kernel Hilbert spaces.  The proof is based on an auxiliary
result, which we begin by stating.  Fix some positive integer $\mobs$
and define
   \begin{align}\label{eq:nu:def}
     \nu(\eps) = \nu(\eps;\mobs) := \inf \Big\{ p: \sum_{k >
       \pdim^\mobs} \hilweight_{k} \le \eps^2 \Big\}.
   \end{align}
With this notation, we have
   \begin{lemma}\label{lem:random:samp:1} Assume $\eps^2 < \hilweight_1 
     $ and $32 \,C_\psi^2\,\mobs\, \nu(\eps) \log \nu(\eps) \le
     \nobs$. Then,
     \begin{align}\label{eq:random:samp:1}
       \pr \big\{ \rpp{\SamOp}(\eps) > \Ctpsi\,\eps^2 + \Ctsig\,
       \hilweight_{\nu(\eps)}\big\} \le 2 \exp \Big( -\frac{1}{32
         C_\psi^2}\frac{\nobs}{\nu(\eps)}\Big).
     \end{align}
   \end{lemma}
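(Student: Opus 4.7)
\medskip
\noindent\textbf{Proof plan.}
The plan is to invoke Theorem~\ref{ThmUpperOne} (specifically the weak bound~\eqref{EqnRPWeak}) at $p = q := \nu(\eps)$. The hypothesis $\eps^2 < \hilweight_1$ forces $q \ge 1$, so the choice is admissible. This reduces the lemma to two high-probability estimates plus a deterministic combination of constants: (i) $\lammin(\PsiMat{q}) \ge \tfrac{1}{2}$, and (ii) $\lamax\bigl(\MuMat{\til{q}}^{1/2}\PsiMat{\til{q}}\MuMat{\til{q}}^{1/2}\bigr) \le \Cpsi^2\eps^2 + \tfrac{3}{2}\Csig\|\hilweight\|_1\hilweight_q$. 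Plugging both into~\eqref{EqnRPWeak}, applying $(a+b)^2 \le (1+\delta)a^2 + (1+\delta^{-1})b^2$ with $\delta=\Cpsi$, and using $\hilweight_{q+1} \le \hilweight_q$ will then produce the stated coefficients $\Ctpsi$, $\Ctsig$.

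For~(i), I would write $\PsiMat{q} = \tfrac{1}{\nobs}\sum_{i=1}^{\nobs} e_i e_i^\top$ with $e_i := (\psi_1(x_i),\dots,\psi_q(x_i))^\top$. Orthonormality of $\{\psi_k\}$ in $L^2(\pr)$ gives $\ex\PsiMat{q} = I_q$, while the uniform bound~\eqref{eq:psi:uniform:bounded} yields $\|e_i\|^2 \le q\Cpsi^2$ almost surely. A standard matrix Bernstein inequality then furnishes $\lammin(\PsiMat{q}) \ge \tfrac{1}{2}$ outside an event of probability $\le 2q\exp(-c\,\nobs/(q\Cpsi^2))$, with $c$ an absolute constant.

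For~(ii), which is the main technical step, I would partition the tail indices $\{q+1,q+2,\dots\}$ into consecutive size-$q$ middle blocks $B_j := \{jq+1,\dots,(j+1)q\}$ for $j=1,\dots,q^{m-1}-1$, together with a far tail $B^* := \{q^m+1,q^m+2,\dots\}$, and let $M_j,M^*$ denote the corresponding diagonal blocks of $\MuMat{\til{q}}^{1/2}\PsiMat{\til{q}}\MuMat{\til{q}}^{1/2}$. Iterating the PSD block inequality from Remark~(b) yields
\begin{align*}
  \lamax\bigl(\MuMat{\til{q}}^{1/2}\PsiMat{\til{q}}\MuMat{\til{q}}^{1/2}\bigr)
  \;\le\; \sum_{j=1}^{q^{m-1}-1}\lamax(M_j) + \lamax(M^*).
\end{align*}
The far tail is handled deterministically by the trace bound $\lamax(M^*) \le \trace(M^*) \le \Cpsi^2\sum_{k>q^m}\hilweight_k \le \Cpsi^2\eps^2$, invoking the definition of $\nu(\eps)$. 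For each middle block I would use $\lamax(M_j) \le \hilweight_{jq+1}\lamax(\PsiMat{B_j}) \le \Csig\hilweight_j\hilweight_q\lamax(\PsiMat{B_j})$, where the first inequality comes from $\lamax(A^{1/2}BA^{1/2}) \le \lamax(A)\lamax(B)$ for PSD $A,B$ and the second from the multiplicative assumption~\eqref{eq:random:samp:assump:sig} together with $\hilweight_{jq+1} \le \hilweight_{jq}$. A second round of matrix Bernstein (one per block), combined with a union bound over the $\le q^{m-1}$ blocks, controls $\lamax(\PsiMat{B_j}) \le \tfrac{3}{2}$ uniformly in $j$; summing then gives $\sum_j \lamax(M_j) \le \tfrac{3}{2}\Csig\|\hilweight\|_1\hilweight_q$.

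The main obstacle I anticipate is this uniform control of $\lamax(\PsiMat{B_j})$ across the $\order(q^{m-1})$ middle blocks. Each block's matrix-Bernstein bound contributes a dimension factor $2q$, and the union bound adds an $m\log q$ term to the exponent which must be absorbed by $\nobs/(q\Cpsi^2)$; this absorption is precisely the content of the hypothesis $32\Cpsi^2 m\nu(\eps)\log\nu(\eps) \le \nobs$. Tracking the absolute constants carefully through both Bernstein invocations, and then through the algebraic combination in~\eqref{EqnRPWeak}, should produce both the stated failure probability $2\exp(-\nobs/(32\Cpsi^2 q))$ and the stated coefficients $\Ctpsi$, $\Ctsig$.
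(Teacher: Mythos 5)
Your proposal is correct and follows essentially the same route as the paper: partition the tail indices into consecutive size-$\nu(\eps)$ blocks plus a far tail, control the far tail by the trace bound and the definition of $\nu(\eps)$, control each middle block by an Ahlswede--Winter-type matrix concentration inequality (the paper's Lemma~\ref{lem:isotropd:prob:bound}) together with the multiplicative eigendecay assumption, union-bound over the $\order(\nu(\eps)^{\mobs-1})$ blocks, and then feed the resulting $\lammin(\PsiMat{\nu(\eps)})\ge\tfrac12$ and tail-eigenvalue estimates into bound~\eqref{EqnRPWeak} with $r=(1+\Cpsi)^{-1}$. The only cosmetic difference is that you split the $\lammin$ and $\lamax$ controls into two steps whereas the paper packages both into a single event $\eventA_\pdim$, but the underlying estimates and the bookkeeping of constants are identical.
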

We prove this claim in Section~\ref{AppLemRandomSamp1} below.

\subsection{Proof of Corollary~\ref{cor:random:samp:1}}

To apply the lemma, recall that we assume that there exists $\mobs$
such that for all (large) $p$, one has
   \begin{align}\label{eq:tail:domination:restate}
     \sum_{k > \pdim^\mobs} \hilweight_k \le \hilweight_{\pdim}.
   \end{align}
   and we let $\mobssig$ be the smallest such $\mobs$. We define
   \begin{align}\label{eq:}
     \muopt(\eps) \defn \inf \big\{ p: \hilweight_{p} \le \eps^2
     \big\},
   \end{align}
   and note that by~(\ref{eq:tail:domination:restate}), we have
   $\nu(\eps;\mobssig) \le \muopt(\eps)$. Then,
   Lemma~\ref{lem:random:samp:1} states that as long as $\eps^2 <
   \hilweight_1$ and $32 \Cpsi^2 \mobssig \muopt(\eps) \log
   \muopt(\eps) \le \nobs$, we have
    \begin{align}\label{eq:random:samp:2}
       \pr \big\{ \rpp{\SamOp}(\eps) > (\Ctpsi  + \Ctsig
       ) \eps^2 \big\} \le 2 \exp \Big(
       -\frac{1}{32 C_\psi^2}\frac{\nobs}{\muopt(\eps)}\Big).
     \end{align}
Now by the definition of $\muopt(\eps)$, we have $\hilweight_j >
\eps^2$ for $j < \muopt(\eps)$, and hence
   \begin{align*}
     \gcal_\nobs^2(\eps) \ge
   \frac{1}{\nobs} \sum_{j \; < \;\muopt(\eps)} \min\{
   \hilweight_j,\eps^2\} =\frac{\muopt(\eps)-1}{\nobs} \,\eps^2 \ge
   \frac{\muopt(\eps)}{2\nobs} \,\eps^2,
   \end{align*}
   since $\muopt(\eps) \ge 2$ when $\eps^2 < \hilweight_1$.  One can
   argue that $\eps \mapsto \gcal_\nobs(\eps)/\eps$ is
   nonincreasing. It follows from definition~(\ref{eq:crit:rad:def})
   that for $\eps \ge \crad_\nobs$, we have
   \begin{align*}
     \muopt(\eps) \le 2\nobs \Big(\frac{\gcal(\eps)}{\eps} \Big)^2 \le
     2\nobs \Big(\frac{\gcal(\crad_\nobs)}{\crad_\nobs} \Big)^2 \le
     2\nobs \crad_\nobs^2,
   \end{align*}
which completes the proof of Corollary~\ref{cor:random:samp:1}.

\subsection{Proof of Lemma~\ref{lem:random:samp:1}}
\label{AppLemRandomSamp1}

For $\xi \in \reals^p$, let $\xi \otimes \xi$ be the rank-one operator
on $\reals^{p}$ given by $\eta \mapsto \iprod{\xi}{ , \eta}_2\,
\xi$. For an operator $A$ on $\reals^p$, let $\mnorm{A}{2}$ denote its
usual operator norm, $\mnorm{A}{2} := \sup_{\|x \|_2 \le 1}\, \|A
x\|_2$. Recall that for a symmetric (i.e., real self-adjoint) operator
$A$ on $\reals^p$, $\mnorm{A}{2} = \sup \{ |\lambda | :
\,\text{$\lambda$ an eigenvalue of $A$} \}$. It follows that
$\mnorm{A}{2} \le \alpha$ is equivalent to $-\alpha I_p \preceq A
\preceq \alpha I_p$.  \newcommand{\PsiSubBlk}[1]{\PsiMat{}^{(#1)}}
\newcommand{\MuSubBlk}[1]{\MuMat{}^{(#1)}}
\newcommand{\eventA}{\mathcal{A}}

Our approach is to first show that $\mnorm{\Psi_p - I_p}{2} \le
\frac{1}{2}$ for some properly chosen $p$ with high probability. It
then follows that $\lammin(\Psi_p) \ge \frac{1}{2}$ and we can use
bound~(\ref{EqnRPWeak}) for that value of $\pdim$. Then, we need to
control $\lamax \big( \MuMat{\pdimt}^{1/2} \MatPsi{\pdimt}^{}
\MuMat{\pdimt}^{1/2} \big)$. To do this, we further partition
$\PsiMat{\pdimt}$ into blocks. In order to have a consistent notation,
we look at the whole matrix $\PsiMat{}$ and let $\PsiSubBlk{k}$
be the principal submatrix indexed by $\{(k-1) \pdim + 1,\dots,(k-1)
\pdim + \pdim\}$, for $k=1,2,\dots,\pdim^{\mobs-1}$. Throughout the
proof, $\mobs$ is assumed to be a fixed positive integer. Also, let
$\PsiSubBlk{\infty}$ be the principal submatrix of $\PsiMat{}$ indexed
by $\{\pdim^\mobs+1,\pdim^\mobs+2,\dots \}$. This provides a full
partitioning of $\PsiMat{}$ for which $\PsiSubBlk{1},\dots,
\PsiSubBlk{\pdim^{\mobs-1}}$ and $\PsiSubBlk{\infty}$ are the diagonal
blocks, the first $\pdim^{\mobs-1}$ of which are $\pdim$-by-$\pdim$
matrices and the last an infinite matrix. To connect with our previous
notations, we note that $\PsiSubBlk{1} = \PsiMat{\pdim}$ and that
$\PsiSubBlk{2}, \dots, \PsiSubBlk{\pdim^{\mobs-1}},
\PsiSubBlk{\infty}$ are diagonal blocks of $\PsiMat{\pdimt}$. Let us
also partition the $\MuMat{}$  matrix and name its diagonal blocks
similarly.

  We will argue that, in fact, we have $\mnorm{\PsiSubBlk{k} - I_p}{2}
  \le \frac{1}{2}$ for all $k=1,\dots,\pdim^{\mobs-1}$, with high
  probability. Let $\eventA_\pdim$ denote the event on which this
  claim holds. In particular, on event $\eventA_\pdim$, we have
  $\PsiSubBlk{k} \preceq \frac{3}{2} I_\pdim$ for $k
  =2,\dots,\pdim^{\mobs-1}$; hence, we can write
  \begin{align}
    \lamax \big( \MuMat{\pdimt}^{1/2} \MatPsi{\pdimt}^{}
  \MuMat{\pdimt}^{1/2} \big) &\le 
  \sum_{k=2}^{\pdim^{\mobs -1}} \lamax 
  \Big( \sqrt{\MuSubBlk{k}} \PsiSubBlk{k} \sqrt{\MuSubBlk{k}} \Big)
  + \lamax 
  \Big( \sqrt{\MuSubBlk{\infty}} \PsiSubBlk{\infty}
  \sqrt{\MuSubBlk{\infty}} \Big) \notag \\
  &\le \frac{3}{2} \sum_{k=2}^{\pdim^{\mobs -1}} \lamax \big(
  \MuSubBlk{k} \big) + \trace \Big(\sqrt{\MuSubBlk{\infty}} \PsiSubBlk{\infty}
  \sqrt{\MuSubBlk{\infty}} \Big) \notag \\
  &= \frac{3}{2} \sum_{k=2}^{\pdim^{\mobs -1}}
  \hilweight_{(k-1)\pdim+1} + \sum_{k \,> \,\pdim^{\mobs}}
  \hilweight_k [\MatPsi{}]_{kk}.
  \label{eq:rand:samp:middle:1}
  \end{align}
  Using assumptions~(\ref{eq:random:samp:assump:sig}) on the sequence
  $\SEQ{\hilweight}$, the first sum can be bounded as
  \begin{align*}
    \sum_{k=2}^{\pdim^{\mobs -1}} \hilweight_{(k-1)\pdim+1} \le
    \sum_{k=2}^{\pdim^{\mobs -1}} \hilweight_{(k-1)\pdim} \le 
    \sum_{k=2}^{\pdim^{\mobs -1}} \Csig
    \hilweight_{k-1}\hilweight_{\pdim} \le \Csig \| \hilweight\|_1 \hilweight_\pdim
  \end{align*}
  Using the uniform boundedness assumption~(\ref{eq:nu:def}), we have
  $[\Psi]_{kk} = \nobs^{-1} \sum_{i=1}^\nobs \psi_k^2(x_i) \le
  \Cpsi^2$. Hence the second sum in~(\ref{eq:rand:samp:middle:1}) is bounded above by $\Cpsi^2 \sum_{k
    > \pdim^{\mobs}} \hilweight_k$.

  We can now apply Theorem~\ref{ThmUpperOne}. Assume for the moment
  that $\eps^2 \ge \sum_{k > \pdim^{\mobs}} \hilweight_k$ so that the
  right-hand side of~(\ref{eq:rand:samp:middle:1}) is bounded above by
  $\frac{3}{2}\Csig \|\hilweight\|_1 \hilweight_\pdim + \Cpsi^2 \eps^2$. Applying
  bound~(\ref{EqnRPWeak}), on event $\eventA_\pdim$, with\footnote{We
    are using the alternate form of the bound based on $(\sqrt{A} +
    \sqrt{B})^2 = \inf_{r \in (0,1)} \big\{ A r^{-1} + B(1-r)^{-1} \big\}$.} $r =
  (1+C_\psi)^{-1}$, we get
\begin{align*}
  \rpp{\SamOp}(\eps^2) &\le 2\Big\{ r^{-1} \eps^2 + (1-r)^{-1} 
  \Big( \frac{3}{2}\Csig \|\hilweight\|_1 \hilweight_\pdim + \Cpsi^2
  \eps^2 \Big)\Big\} + \hilweight_{p+1}  \\
  &= 2(1+C_\psi)^2 \eps^2 + 3(1+\Cpsi^{-1}) \Csig \|\hilweight\|_1
  \hilweight_{p} + \hilweight_{p+1}. \\
  &\le \Ctpsi \,\eps^2 + \Ctsig \,\hilweight_\pdim
\end{align*}
where $\Ctpsi \defn 2(1+C_\psi)^2$ and $\Ctsig \defn 3(1+\Cpsi^{-1})
\Csig \|\hilweight\|_1+1 $. To summarize, we have shown the following
\begin{align}\label{eq:little:implic}
  \text{Event $\eventA_\pdim$} \quad \text{and} \quad \eps^2 \ge
 \sum_{k > \pdim^\mobs} \hilweight_{k} \implies \rpp{\SamOp}(\eps^2) \le
 \Ctpsi\, \eps^2 + \Ctsig\, \hilweight_\pdim.
\end{align}

It remains to control the probability of $\eventA_\pdim \defn \bigcap_{k =
  1}^{\pdim^{\mobs-1}} \big\{ \mnorm{\PsiSubBlk{k} - I_p}{2}
  \le \frac{1}{2} \big\}$. We
start with the deviation bound on $\PsiSubBlk{1} - I_p$, and then extend by
 union bound. We will use the following lemma
which follows, for example, from the Ahlswede-Winter
bound~\cite{Ahlswede2002}, or from~\cite{Rudelson1999}. (See
also~\cite{VershRMT,Tro10,Wigderson2008}.)
\begin{lemma}\label{lem:isotropd:prob:bound}
  Let $\xi_1,\dots,\xi_\nobs$ be i.i.d. random vectors in  $\reals^p$
  with $\ex \xi_1 \otimes \xi_1 = I_p$ and $\| \xi_1 \|_2 \le C_p$
  almost surely for some constant $C_p$. Then, for $\delta \in (0,1)$,
  \begin{align}
    \pr \Big\{ \bigmnorm{\nobs^{-1} \sum_{i=1}^\nobs \xi_i \otimes \xi_i
      - I_p }_{2} > \delta \Big\} \le p \exp \big( -
     \frac{\nobs \delta^2}{4C_p^2} \big).
  \end{align}
\end{lemma}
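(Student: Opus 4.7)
The plan is to derive the bound via the matrix Laplace-transform (Chernoff) method, as formalized by Ahlswede and Winter. I would first set $Y_i := \xi_i \otimes \xi_i - I_p$, which are i.i.d., Hermitian, and mean-zero, with $\mnorm{Y_i}{2} \le C_p^2 + 1 \le 2 C_p^2$ (using that $p = \ex \|\xi_1\|_2^2 \le C_p^2$ forces $C_p^2 \ge 1$). Write $T_\nobs := \nobs^{-1} \sum_{i=1}^\nobs Y_i$. Since $\mnorm{T_\nobs}{2} = \max\{\lamax(T_\nobs), \lamax(-T_\nobs)\}$, a union bound reduces the task to bounding the one-sided tail $\pr\{\lamax(\pm T_\nobs) > \delta\}$.

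For one such tail, I would combine the matrix Markov inequality,
\begin{align*}
  \pr\Big\{\lamax\Big(\sum_{i=1}^\nobs Y_i\Big) > \nobs\delta\Big\}
   \,\le\, e^{-\theta \nobs \delta}\, \ex \trace \exp\Big(\theta \sum_{i=1}^\nobs Y_i\Big),
  \qquad \theta > 0,
\end{align*}
with the Ahlswede--Winter subadditivity (equivalently, Lieb's concavity in Tropp's formulation), which factorizes the trace MGF of an i.i.d.\ sum as $\ex\trace\exp(\theta\sum_i Y_i) \le p \cdot \lamax(\ex e^{\theta Y_1})^\nobs$. To control the single-step MGF I would lift the elementary scalar bound $e^x \le 1 + x + x^2$ (valid for $|x|\le 1$) to matrices through the spectral calculus, under the admissibility condition $\theta\mnorm{Y_1}{2} \le 1$, i.e.\ $\theta \le (2C_p^2)^{-1}$. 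Combined with $\ex Y_1 = 0$ and the variance estimate
\begin{align*}
  \ex Y_1^2 \,=\, \ex\bigl[\|\xi_1\|_2^2 \, \xi_1 \otimes \xi_1\bigr] - I_p
  \;\preceq\; C_p^2 \, \ex[\xi_1 \otimes \xi_1] - I_p
  \;=\; (C_p^2 - 1) I_p,
\end{align*}
this yields $\ex e^{\theta Y_1} \preceq (1 + \theta^2 C_p^2) I_p \preceq e^{\theta^2 C_p^2} I_p$.

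Putting the pieces together, the one-sided tail is bounded by $p\,\inf_\theta \exp(-\nobs\theta\delta + \nobs\theta^2 C_p^2)$; the minimizer $\theta^* = \delta/(2C_p^2)$ lies in the admissible range because $\delta \le 1$, and substituting produces $p\exp(-\nobs\delta^2/(4C_p^2))$. By symmetry the same bound applies to the tail of $\lamax(-T_\nobs)$. The main technical nuisance I expect is absorbing the factor of two arising from the two-sided union bound, which the paper's statement suppresses: the cleanest fix is to replace the exponential by $\cosh$ in the Markov step (using $\pr\{\mnorm{T_\nobs}{2} > \delta\} \le \ex\trace\cosh(\theta\sum_i Y_i)/\cosh(\theta\nobs\delta)$ and the matrix Hoeffding bound $\ex\cosh(\theta Y_1)\preceq e^{\theta^2 C_p^2/2}I_p$), which is the standard way the Ahlswede--Winter framework specializes to operator-norm bounds. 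Beyond this bookkeeping and a careful choice of the admissible range for $\theta$, no ideas are required outside the matrix-concentration toolbox already cited in the paper.
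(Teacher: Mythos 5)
The paper gives no proof of this lemma; it refers the reader to Ahlswede--Winter and to Rudelson, which is precisely the machinery you reconstruct, and your outline (matrix Markov inequality for $\lamax$, Golden--Thompson/Lieb factorization of the i.i.d.\ trace MGF, the scalar bound $e^x \le 1 + x + x^2$ for $x \le 1$ lifted through the spectral calculus, and the variance estimate $\ex Y_1^2 \preceq (C_p^2-1) I_p$) is sound and standard. A small sharpening: the eigenvalues of $Y_1 = \xi_1\otimes\xi_1 - I_p$ are $\|\xi_1\|_2^2 - 1$ (multiplicity one) and $-1$, so $\mnorm{Y_1}{2} \le C_p^2$ rather than your $2C_p^2$; your coarser bound is harmless here since the minimizer $\theta^\ast = \delta/(2C_p^2)$ stays admissible for $\delta < 1$ either way.

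The one inexact step is the closing $\cosh$ remark: that device does \emph{not} absorb the factor of two. Carrying it through, $\pr\{\mnorm{T_\nobs}{2} > \delta\} \le \ex\trace\cosh(\theta\nobs T_\nobs)/\cosh(\theta\nobs\delta)$, and
\begin{align*}
\ex\trace\cosh(\theta\nobs T_\nobs) \;=\; \tfrac{1}{2}\big(\ex\trace e^{\theta\nobs T_\nobs} + \ex\trace e^{-\theta\nobs T_\nobs}\big)\;\le\; p\,e^{\nobs\theta^2 C_p^2},
\end{align*}
but the only universal lower bound on the denominator is $\cosh(x) \ge \tfrac{1}{2}e^{x}$, so the factor of two reappears and you again obtain $2p\exp(-\nobs\delta^2/(4C_p^2))$, exactly what the explicit union bound over $\lamax(\pm T_\nobs)$ gives. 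In short, the lemma's prefactor $p$ (rather than $2p$) is what most textbook Ahlswede--Winter statements write as $2p$, and your argument, like theirs, delivers $2p$. This discrepancy is cosmetic for the paper's purposes, since the prefactor is later absorbed as $e^{\log p}$ into the exponent $m\log p$ and a constant factor of $2$ survives anyway in the final probability, but you should not claim the $\cosh$ route removes it.
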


Recall that for the time sampling operator, $[\linop\,
\psi_k]_i = \frac{1}{\sqrt{\nobs}} \psi_k(x_i)$ so that from~(\ref{eq:Psi:def}),
\begin{align*}
  \Psi_{k \ell} = \frac{1}{\nobs} \sum_{i=1}^\nobs \psi_k(x_i) \psi_\ell(x_i)
\end{align*}
Let $\xi_i :=
(\psi_k(x_i), 1\le k \le p) \in \reals^p$ for $i=1,\dots,\nobs$. Then, $\{\xi_i\}$
satisfy the conditions of Lemma~\ref{lem:isotropd:prob:bound}. In
particular, letting $e_k$ denote the $k$-th standard basis vector of
$\reals^p$, we note that
\begin{align*}
  \iprod{e_k}{\ex (\xi_i \otimes \,\xi_i)e_\ell}_2 =
\ex \iprod{e_k}{\xi_i}_{2} \iprod{e_\ell}{\xi_i}_{2} =
\iprod{\psi_k}{\psi_\ell}_{L^2} = \delta_{k \ell}
\end{align*}
and $\| \xi_{i}
\|_2 \le \sqrt{p} \,C_\psi$, where we have used uniform boundedness of
$\{\psi_k\}$ as in~(\ref{eq:psi:uniform:bounded}). Furthermore, we
have $\PsiSubBlk{1} = n^{-1} \sum_{i=1}^\nobs \xi_i \otimes
\xi_i$. Applying Lemma~\ref{lem:isotropd:prob:bound} with $C_p =
\sqrt{p} C_\psi$ yields,
\begin{align}\label{eq:psi:deviat:temp}
  \pr \big\{ \mnorm{ \PsiSubBlk{1}
      - I_p }{2} > \delta \big\} \; \le\;   \pdim \exp \big(-
     \frac{\delta^2}{4C_\psi^2}\frac{\nobs}{p} \big).
\end{align}
Similar bounds hold for $\PsiSubBlk{k}$,
$k=2,\dots,\pdim^{\mobs-1}$. Applying the union bound, we get
\begin{align*}
  \pr \bigcup_{k=1}^{\pdim^{\mobs-1}} \big\{ \mnorm{ \PsiSubBlk{k}
      - I_p }{2} > \delta \big\} \; \le\;    \exp \big( \mobs \log \pdim -
     \frac{\delta^2}{4C_\psi^2}\frac{\nobs}{p} \big).
\end{align*}

For simplicity, let $A = A_{\nobs,p} := \nobs/(4 C_\psi^2\, p)$. We
impose $\mobs \log p \le \frac{A}{2} \,\delta^2$ so that the exponent
in~(\ref{eq:psi:deviat:temp}) is bounded above by $-\frac{A}{2}
\delta^2$. Furthermore, for our purpose, it is enough to take $\delta
= \frac{1}{2}$. It follows that
\begin{align}\label{eq:prob:bound:temp}
  \pr (\eventA_\pdim^c) = \pr \bigcup_{k=1}^{\pdim^{\mobs-1}} \big\{
  \mnorm{ \PsiSubBlk{k} - I_p }{2} > \frac{1}{2} \big\}
  \; \le\; \exp \big(-
     \frac{1}{32C_\psi^2}\frac{\nobs}{p} \big),
\end{align}
if $32 C_\psi^2 \, \mobs\, p \log p \le \nobs$.
Now, by~(\ref{eq:little:implic}), under $\eps^2 \ge \sum_{k > p^\mobs}\hilweight_{k}$,
$\rpp{\SamOp}(\eps^2) > \Ctpsi\,\eps^2 + \Ctsig\,\hilweight_\pdim$ implies $\eventA_\pdim^c$. Thus, the exponential bound
    in~(\ref{eq:prob:bound:temp}) holds for $\pr
    \{\rpp{\SamOp}(\eps^2) > \Ctpsi \,\eps^2 + \Ctsig\, \hilweight_\pdim\}$ under the assumptions. We are
    to choose $p$ and the bound is optimized by making $p$ as
    small as possible. Hence, we take $p$ to be $\nu(\eps) := \inf
    \{p:\; \eps^2 \ge \sum_{k > \pdim^\mobs}\hilweight_{k}\}$ which proves
    Lemma~\ref{lem:random:samp:1}. (Note that, in general,
    $\nu(\eps)$ takes its values in $\{0,1,2,\dots\}$. The
    assumption $\eps^2 < \hilweight_1$ 
    guarantees that $\nu(\eps) \neq 0$.)


\section{Proof of Lemma~\ref{lem:sp:per}}\label{app:sparse:periodic}
Assume $\hilweight_k = C k^{-\alpha}$, for some $\alpha \ge 2$. First, note the
following upper bound on the tail sum
\begin{align}\label{eq:poly:decay:tail:bound}
  \sum_{k > p} \hilweight_k \le C \int_p^\infty x^{-\alpha} \, dx =
  C_1(\alpha) \,p^{1-\alpha}.
\end{align}
Furthermore, from the bounds~(\ref{eq:sp:per:bnd1})
and~(\ref{eq:sp:per:bnd2}), we have, for $k \ge \nobs + 1$,
\begin{align}\label{eq:sp:per:diag:bound}
  [\PsiMat{}]_{kk} \le \min\{c_1,c_2\}.
\end{align}
To simplify notation, let us define $\periodidx \defn \{1, 2,\dots,
\gamma \nobs\}$.

Consider the case $\alpha > 2$. We will use the
$\ell_\infty$--\,$\ell_\infty$ upper bound of~(\ref{eq:infnorm:bound}),
with $\pdim = \nobs$. Fix
some $k \ge \nobs + 1$. Note that $\hilweight_k \le \hilweight_{\nobs
  + 1}$. Then, recalling the assumptions on $\PsiMat{}$ and 
the definition of $S_k$, we have
\begin{align}
  \sum_{ \ell \ge \nobs + 1}
  \sqrt{\hilweight_{k}}\sqrt{\hilweight_{\ell}}\, \big|
  [\PsiMat{}]_{k,\ell} \big| &\le \sqrt{\hilweight_{\nobs+1}}
  \sum_{q=0}^\infty \sum_{r = 1}^{\gamma \nobs}
  \sqrt{\hilweight_{\nobs+ r+q \gamma \nobs}}
  \big| [\PsiMat{}]_{k,\nobs+r + q \gamma \nobs} \big| \notag \\
  &= \sqrt{\hilweight_{\nobs+1}} \sum_{q=0}^\infty \sum_{r =
    1}^{\gamma \nobs} \sqrt{\hilweight_{\nobs+ r+q \gamma \nobs}}
  \big| [\PsiMat{}]_{k,\nobs+r} \big| \notag \\
  &\le \sqrt{\hilweight_{\nobs+1}} \sum_{q=0}^\infty \Big\{ c_1
  \sum_{r \,\in\, S_k} \sqrt{\hilweight_{\nobs+ r+q \gamma \nobs}} +
  \frac{c_2}{\nobs} \sum_{r \,\in\, \periodidx \setminus S_k}
  \sqrt{\hilweight_{\nobs+ r+q \gamma \nobs}} \Big\}.\label{eq:sp:per:mid:a1}
\end{align}
Using~(\ref{eq:poly:decay:tail:bound}), the second double sum
in~(\ref{eq:sp:per:mid:a1}) is bounded by
\begin{align}
   \sum_{q=0}^\infty \;\sum_{r \,\in\, \periodidx \setminus S_k}
  \sqrt{\hilweight_{\nobs+ r+q \gamma \nobs}} \; \le \;
  \sum_{\ell > \nobs} \sqrt{\hilweight_\ell} \; \le\; C_2(\alpha)
  \,\nobs^{1-\alpha/2}.
  \label{eq:sp:per:mid:a2}
\end{align}
Recalling that $S_k \subset \periodidx$ and $|S_k|= \eta$, the first double sum in~(\ref{eq:sp:per:mid:a1}) can be
bounded as follows
\begin{align}
  \sum_{q=0}^\infty \;\;\sum_{r \,\in\, S_k} \sqrt{\hilweight_{\nobs+
      r+q \gamma \nobs}}
  &= \sqrt{C} \sum_{q=0}^\infty \;\;\sum_{r \,\in\, S_k} {(\nobs+
      r+q \gamma \nobs)}^{-\alpha/2} \notag\\
  &\le \sqrt{C} \sum_{q=0}^\infty \;\;\sum_{r \,\in\, S_k} {(\nobs
     +q \gamma \nobs)}^{-\alpha/2} \notag \\
  &\le \sqrt{C} \,\eta \,\sum_{q=0}^\infty  {(1+q \gamma
   )}^{-\alpha/2}  \nobs^{-\alpha/2} \notag \\
  &\le \sqrt{C} \, \eta \Big( 1 + \,\gamma^{-\alpha/2} 
  \sum_{q=1}^\infty q^{-\alpha/2}
  \Big) \nobs^{-\alpha/2} \notag \\
  &= C_3(\alpha,\gamma,\eta) \,\nobs^{-\alpha/2}\label{eq:sp:per:mid:a3}
\end{align}
where in the last line we have used $\sum_{q=1}^\infty q^{-\alpha/2} <
\infty$ due to $\alpha/2 >
1$. Combining~(\ref{eq:sp:per:mid:a1}),~(\ref{eq:sp:per:mid:a2})
and~(\ref{eq:sp:per:mid:a3}) and noting that
$\sqrt{\hilweight_{\nobs+1}} \le \sqrt{C} \nobs^{-\alpha/2}$, we obtain
\begin{align}
  \sum_{ \ell \ge \nobs + 1}
  \sqrt{\hilweight_{k}}\sqrt{\hilweight_{\ell}} \,\big|
  [\PsiMat{}]_{k,\ell} \big| &\le \sqrt{C} \nobs^{-\alpha/2} \Big\{
 c_1 C_3(\alpha,\gamma,\eta)\,  \nobs^{-\alpha/2} + \frac{c_2}{\nobs}\,
 C_2(\alpha)\, \nobs^{1-\alpha/2}\Big\} = C_4(\alpha,\eta,\gamma)\, \nobs^{-\alpha}.
\end{align}
Taking supremum over $k \ge 1$ and applying the $\ell_\infty$--\,$\ell_\infty$ bound of~(\ref{eq:infnorm:bound}),
with $\pdim = \nobs$, concludes the proof of part~(a).

Now, consider the case $\alpha=2$. The above argument breaks down in
this case because $\sum^{\infty}_{q=1} q^{-\alpha/2}$ does not
converge for $\alpha =2$. A remedy is to further partition the matrix
$ \MuMat{\nobst}^{1/2} \MatPsi{\nobst}^{} \MuMat{\nobst}^{1/2} $
. Recall that the rows and columns of this matrix are indexed by
$\{\nobs+1,\nobs+2,\dots \}$. Let $A$ be the principal submatrix
indexed by $\{\nobs+1,\nobs+2,\dots,\nobs^2\}$ and $D$ be the
principal submatrix indexed by $\{\nobs^2+1, \nobs^2
+2, \dots \}$. We will use a combination of the
bounds~(\ref{eq:sp:per:bnd1}) and~(\ref{eq:sp:per:bnd2}), and the
well-known perturbation bound $\lamax \big[ \big(
\begin{smallmatrix}
  A & C \\ C^T & D
\end{smallmatrix}
\big)\big] \le \lamax(A) + \lamax(D)$, to write
\begin{align}\label{eq:further:part}
  \lamax \big( \MuMat{\nobst}^{1/2}
    \MatPsi{\nobst}^{} \MuMat{\nobst}^{1/2} \big) \le \lamax(A) +
    \lamax(D)
    \le \mnorm{A}{\infty} + \trace(D).
\end{align}
The second term is bounded as
\begin{align}\label{eq:trace:D:bound}
  \trace(D) = \sum_{ k > \nobs^2} \hilweight_k \, 
  [\PsiMat{}]_{kk} \le \min\{c_1,c_2\} \sum_{ k > \nobs^2}
  \hilweight_k =  \min\{c_1,c_2\} \, (\nobs^2)^{1-2} =
  C_5(\gamma) \, \nobs^{-2},
\end{align}
where we have used~(\ref{eq:poly:decay:tail:bound})
and~(\ref{eq:sp:per:diag:bound}). To bound the first term, fix $k \in
\{ \nobs + 1,\dots, \nobs^2\}$. By an argument
similar to that of part~(a) and noting that $\gamma \ge 1$, hence
$\gamma \nobs^2 \ge \nobs^2 $, we have
\begin{align}
  \sum_{ \ell = \nobs + 1}^{\nobs^2}
  \sqrt{\hilweight_{k}}\sqrt{\hilweight_{\ell}}\, \big|
  [\PsiMat{}]_{k,\ell} \big| 
  &\le \sqrt{\hilweight_{\nobs+1}}
  \sum_{q=0}^{\nobs} \sum_{r = 1}^{\gamma \nobs}
  \sqrt{\hilweight_{\nobs+ r+q \gamma \nobs}}
  \big| [\PsiMat{}]_{k,\nobs+r} \big| \notag \\
  &\le \sqrt{\hilweight_{\nobs+1}} \sum_{q=0}^{\nobs} \Big\{ c_1
  \sum_{r \,\in\, S_k} \sqrt{\hilweight_{\nobs+ r+q \gamma \nobs}} +
  \frac{c_2}{\nobs} \sum_{r \,\in\, \periodidx \setminus S_k}
  \sqrt{\hilweight_{\nobs+ r+q \gamma \nobs}} \Big\}.\label{eq:sp:per:mid:b1}
\end{align}
Using $\gamma
\ge 1$ again, the second double sum in~(\ref{eq:sp:per:mid:b1}) is bounded as
\begin{align}
  \sum_{q=0}^{\nobs} \;\;\sum_{r \,\in\, \periodidx \setminus S_k}
  \sqrt{\hilweight_{\nobs+ r+q \gamma \nobs}} \le \sum_{\ell =
    \nobs+1}^{3 \gamma \nobs^2}
  \sqrt{\hilweight_{\ell}} \le 
  \; \sqrt{C} \sum_{\ell = 2}^{3 \gamma \nobs^2}
  \frac{1}{\ell} \le \sqrt{C} \log (3\gamma \nobs^2) \le 
  C_6(\gamma) \log \nobs,\label{eq:sp:per:mid:b2}
\end{align}
for sufficiently large $n$. Note that we have used the bound
$\sum_{\ell=2}^p \ell^{-1} \le \int_{1}^p x^{-1}\,dx = \log p$.
The first double sum in~(\ref{eq:sp:per:mid:b1}) is bounded as follows
\begin{align}
  \sum_{q=0}^\infty \;\;\sum_{r \,\in\, S_k} \sqrt{\hilweight_{\nobs+
      r+q \gamma \nobs}}
  &= \sqrt{C} \sum_{q=0}^{\nobs} \;\;\sum_{r \,\in\, S_k} {(\nobs+
      r+q \gamma \nobs)}^{-1} \notag\\
  &\le \sqrt{C} \,\eta \,\sum_{q=0}^{\nobs}  {(1+q \gamma
   )}^{-1}  \nobs^{-1} \notag \\
  &\le \sqrt{C} \, \eta \Big( 1 + \,\gamma^{-1} 
  + \gamma^{-1} \sum_{q=2}^{\nobs} q^{-1}
  \Big) \nobs^{-1} \notag \\
  &= C_7(\gamma,\eta) \,\nobs^{-1} \log \nobs,\label{eq:sp:per:mid:b3}
\end{align}
for $\nobs$ sufficiently
large. Combining~(\ref{eq:sp:per:mid:b1}),~(\ref{eq:sp:per:mid:b2})
and~(\ref{eq:sp:per:mid:b3}), taking supremum over $k$ and using the simple
bound $\sqrt{\hilweight_{\nobs+1}} \le \sqrt{C} \nobs^{-1}$, we get
\begin{align}
  \mnorm{A}{\infty} \le \sqrt{C} \nobs^{-1} \Big\{
 c_1 C_7(\gamma,\eta)\,  \frac{\log n}{\nobs} + \frac{c_2}{\nobs}\,
 C_6(\gamma)\, \log \nobs\Big\} = C_8(\gamma,\eta)\, \frac{\log
   \nobs}{\nobs^2}
\end{align}
which in view of~(\ref{eq:trace:D:bound}) and~(\ref{eq:further:part}) completes the proof of part~(b).


\section{Relationship between $\rp(\eps)$ and $\tpl(\eps)$}
\label{app:lower:quant}

In this appendix, we prove the claim made in Section~\ref{sec:intro}
about the relation between the upper quantities $\rp$ and $\tp$ and
the lower quantities $\tpl$ and $\rpl$. We only carry out the proof
for $\rp$; the dual version holds for $\tp$. To simplify the argument,
we look at slightly different versions of $\rp$ and $\tpl$, defined as
\begin{align}
\rpo (\eps) \;&:=\; \sup \big\{ \|f\|_\Lt^2: \; f \in \ballh, \;
\|f\|_\linop^2\; < \eps^2 \big\}, \\ \tplo(\delta) \;&:=\; \inf
\;\big\{ \|f\|_\linop^2\;: \; f \in \ballh, \; \|f\|_\Lt^2 >
\delta^2\big\}
\end{align}
and prove the following
\begin{align}\label{eq:rpo:tplo}
  {\rpo}^{-1}(\delta) = \tplo(\delta)
\end{align}
where ${\rpo}^{-1}(\delta) := \inf \{ \eps^2 :\; \rpo(\eps) >
\delta^2\}$ is a generalized inverse of $\rpo$. To
see~(\ref{eq:rpo:tplo}), we note that $\rp(\eps) > \delta^2$ iff
there exists $f \in \ball_\Hil$ such that $\|f\|^2_\linop < \eps^2$
and $\|f\|^2_{L^2} > \delta^2$. But this last statement is equivalent
to $\tplo(\delta) < \eps^2$. Hence,
\begin{align}
   {\rpo}^{-1}(\delta) = \inf \{ \eps^2 :\; \tplo(\delta) < \eps^2
   \}
\end{align}
which proves~(\ref{eq:rpo:tplo}).

Using the following lemma, we can use relation~(\ref{eq:rpo:tplo}) to
convert upper bounds on $\rp$ to lower bounds on $\tpl$.
\begin{lemma}\label{lem:gen:inv}
  Let $t \mapsto p(t)$ be a nondecreasing function (defined on the
  real line with values in the extended real line.). Let $q$ be its
  generalized inverse defined as $q(s) := \inf \{ t:\; p(t) >
  s\}$. Let $r$ be a properly invertible (i.e., one-to-one) function
  such that $p(t) \le r(t)$, for all $t$. Then,
  \begin{itemize}
  \item[(a)] $q(p(t)) \ge t$, for all $t$,
  \item[(b)] $q(s) \ge r^{-1}(s)$, for all $s$.
  \end{itemize}
\end{lemma}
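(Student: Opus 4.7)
\medskip

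The plan is to deduce both parts directly from the definition of the generalized inverse, using monotonicity in an essentially symbolic way. Let me write $q(s) = \inf \{ t : p(t) > s\}$ throughout, so the two claims are purely set-theoretic statements about this infimum.

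For part~(a), the goal is to show $\inf \{ t' : p(t') > p(t)\} \ge t$. I would argue by contraposition on the membership of the defining set: if $t' < t$, then monotonicity of $p$ gives $p(t') \le p(t)$, which rules out the strict inequality $p(t') > p(t)$. Hence every element of the set is at least $t$, and the infimum inherits the same lower bound. This step is entirely routine.

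For part~(b), the hypothesis $p(t) \le r(t)$ translates into a set inclusion: whenever $p(t) > s$ we also have $r(t) > s$, so $\{ t : p(t) > s\} \subseteq \{ t : r(t) > s\}$, and taking infima gives $q(s) \ge \inf \{ t : r(t) > s\}$. To finish, one identifies $\inf \{ t : r(t) > s\}$ with $r^{-1}(s)$. This step is where I would have to be careful about what ``properly invertible'' means: for the identification to hold I need $r$ to be strictly increasing (so that $\{ t : r(t) > s\} = (r^{-1}(s), \infty)$), which is the natural reading when $p \le r$ and $p$ is nondecreasing. Under strict monotonicity of $r$, the infimum is exactly $r^{-1}(s)$, completing the bound.

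The main (minor) obstacle is just clarifying the monotonicity assumption on $r$; once that is pinned down, both parts are one-line consequences of the definition of $q$ and of the monotonicity of $p$ and $r$. No analytic machinery beyond elementary order arguments is needed, and the extended-real-line conventions (allowing $q(s) = +\infty$ when the defining set is empty, or $-\infty$ when it is all of $\reals$) are handled automatically by the convention $\inf \emptyset = +\infty$.
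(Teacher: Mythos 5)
Your part~(a) is essentially the paper's argument in contrapositive form (the paper phrases it as a proof by contradiction: if $q(p(t)) < t$ there is some $\alpha_0 < t$ with $p(\alpha_0) > p(t)$, contradicting monotonicity of $p$); these are the same idea. For part~(b) you take a genuinely different route. The paper deduces (b) from (a): it first observes that $q$ is \emph{itself} nondecreasing (because $s_1 \le s_2$ shrinks the set $\{t : p(t) > s\}$), then writes $t \le q(p(t)) \le q(r(t))$ for all $t$, and finally substitutes $t := r^{-1}(s)$ using $r(r^{-1}(s)) = s$. Crucially, that chain uses monotonicity of $q$ and invertibility of $r$ but never uses monotonicity of $r$. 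Your proof instead argues directly from the set inclusion $\{t : p(t) > s\} \subseteq \{t : r(t) > s\}$, which is fine, but then must identify $\inf\{t : r(t) > s\}$ with $r^{-1}(s)$. That identification really does require $r$ to be strictly increasing —- as you yourself flag —- whereas the paper's version works for any invertible $r$ admitting a right inverse. In the one place the lemma is actually used, $r(t) = At + B$ with $A > 0$, so both proofs apply; but the paper's approach is slightly cleaner, sidesteps the ambiguity in ``properly invertible,'' and makes part~(b) a corollary of part~(a) rather than a parallel argument.
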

\begin{proof}
   Assume~(a) does not hold, that is, $\inf\{ \alpha:\, p(\alpha) > p(t)\}
   < t$. Then, there exists $\alpha_0$ such that $p(\alpha_0) > p(t)$
   and $\alpha_0 < t$. But this contradicts $p(t)$ being
   nondecreasing. For part~(b), note that~(a) implies $t \le q(p(t))
   \le q(r(t))$, since $q$ is nondecreasing by definition. Letting $t
   := r^{-1}(s)$ and noting that $r(r^{-1}(s)) = s$, by assumption,
   proves~(b).
\end{proof}

Let $p = \rpo$, $q = \tplo$ and $r(t) = A t+
B$ for some constant $A > 0$. Noting that $\rpo \le \rp$ and $\tpl(\cdot + \gamma) \ge
\tplo$ for any $\gamma > 0$, we obtain
from Lemma~\ref{lem:gen:inv} and~(\ref{eq:rpo:tplo}) that
\begin{align}\label{eq:rp:tpl:bound:translation}
  \rp(\eps) \le A \,\eps^2 + B \; \implies \;
  \tpl(\delta+) \ge  \frac{\delta^2}{A} - B,
\end{align}
where $\tpl(\delta+)$ denotes the right limit of $\tpl$ as
$\delta^2$.  
This may be used to 
translate an upper bound of the form~(\ref{EqnRPWeak}) on $\rp$ to a
corresponding lower bound on $\tpl$. 


\section{The $2\times 2$ subproblem\label{app:2by2}}
The following subproblem arises in the proof of
Theorem~\ref{ThmUpperOne}.
\begin{align}\label{eq:2by2:subp}
  F(\eps^2) := \sup \Big\{
  \underbrace{\begin{pmatrix}
    r & s
  \end{pmatrix}
  \begin{pmatrix}
    u^2 & 0 \\
    0 & v^2
  \end{pmatrix}
  \begin{pmatrix}
    r \\ s
  \end{pmatrix}}_{=: \;x(r,s)} :\;
  r^2 + s^2 \le 1, \; 
  \underbrace{\begin{pmatrix}
    r & s
  \end{pmatrix}
  \begin{pmatrix}
    a^2 & 0 \\
    0 & d^2
  \end{pmatrix}
  \begin{pmatrix}
    r \\ s
  \end{pmatrix}}_{=:\; y(r,s)} \le \eps^2
\Big\},
\end{align}
where $u^2,v^2,a^2$ and $d^2$ are given constants and the optimization
is over $(r,s)$.
Here, we discuss the solution in some detail; in particular, we provide
explicit formulas for $F(\eps^2)$. Without loss of generality assume
$u^2 \ge v^2$. Then, it is clear that $F(\eps^2) \le u^2$ and
$F(\eps^2) = u^2$ for $\eps^2 \ge u^2$. Thus, we are interested in what
happens when $\eps^2 < u^2$.

The problem is easily solved by drawing a picture. Let $x(r,s)$ and
$y(r,s)$ be as denoted in the last display. Consider the set
\begin{align}
  \scal &:= \big\{ \big(x(r,s), \,y(r,s) \big):\; r^2 + s^2 \le 1\} \notag
  \\ &\;=
  \big\{ r^2 (u^2,a^2) + s^2 (v^2,d^2) + q^2(0,0):\; r^2 + s^2 + q^2 =
  1 \big\}
  \notag \\
  &\;= \conv \big\{(u^2,a^2),\, (v^2,d^2),\, (0,0)\big\}.\label{eq:scal:def}
\end{align}
That is, $\scal$ is the convex hull of the three points $(u^2,a^2)$,
$(v^2,d^2)$ and the origin $(0,0)$.
 
Then, two (or maybe
three) different
pictures arise depending on whether $a^2 > d^2$ (and whether $d^2 \ge
v^2$ or $d^2 < v^2$) or $a^2 \le d^2$; see Fig.~\ref{fig:1}.
\begin{figure}[tb]
  \centering
  \includegraphics[scale=0.4]{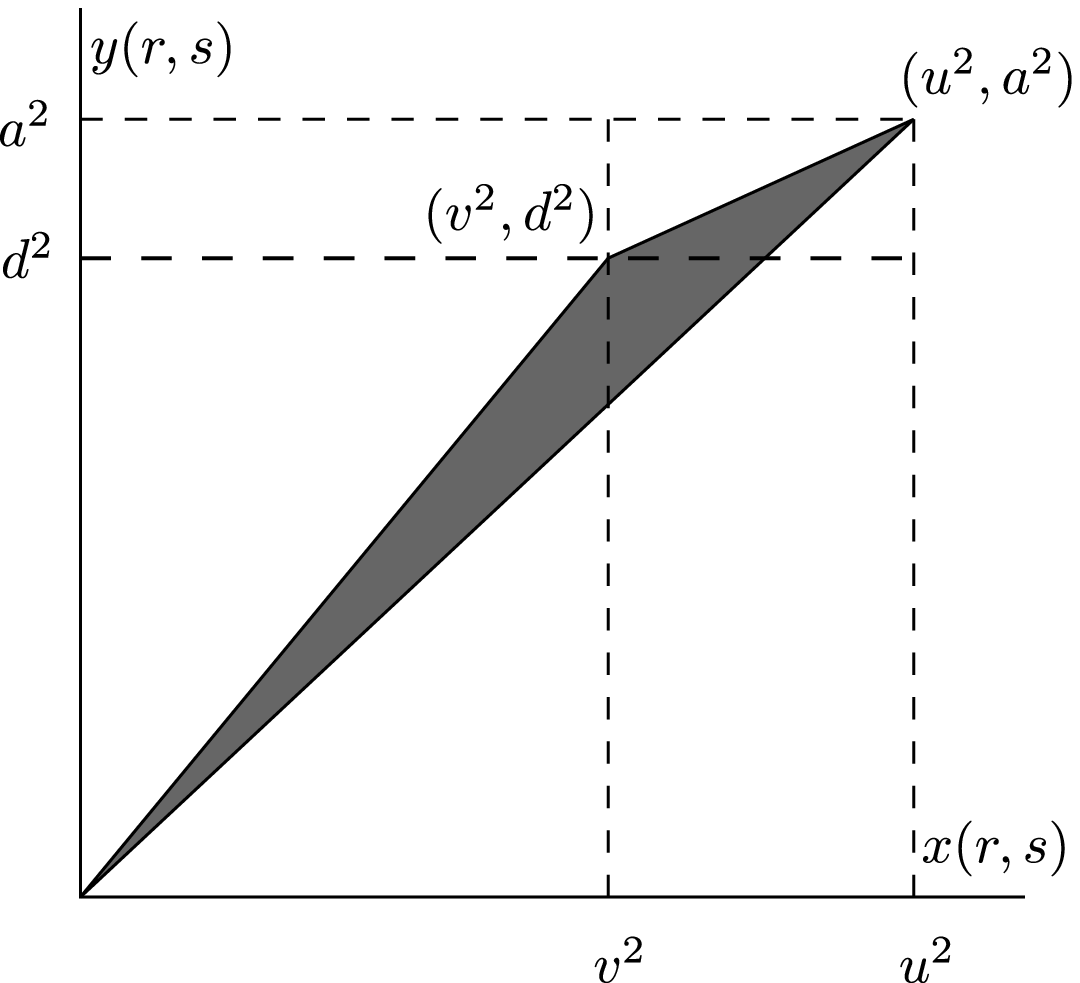}
  \includegraphics[scale=0.4]{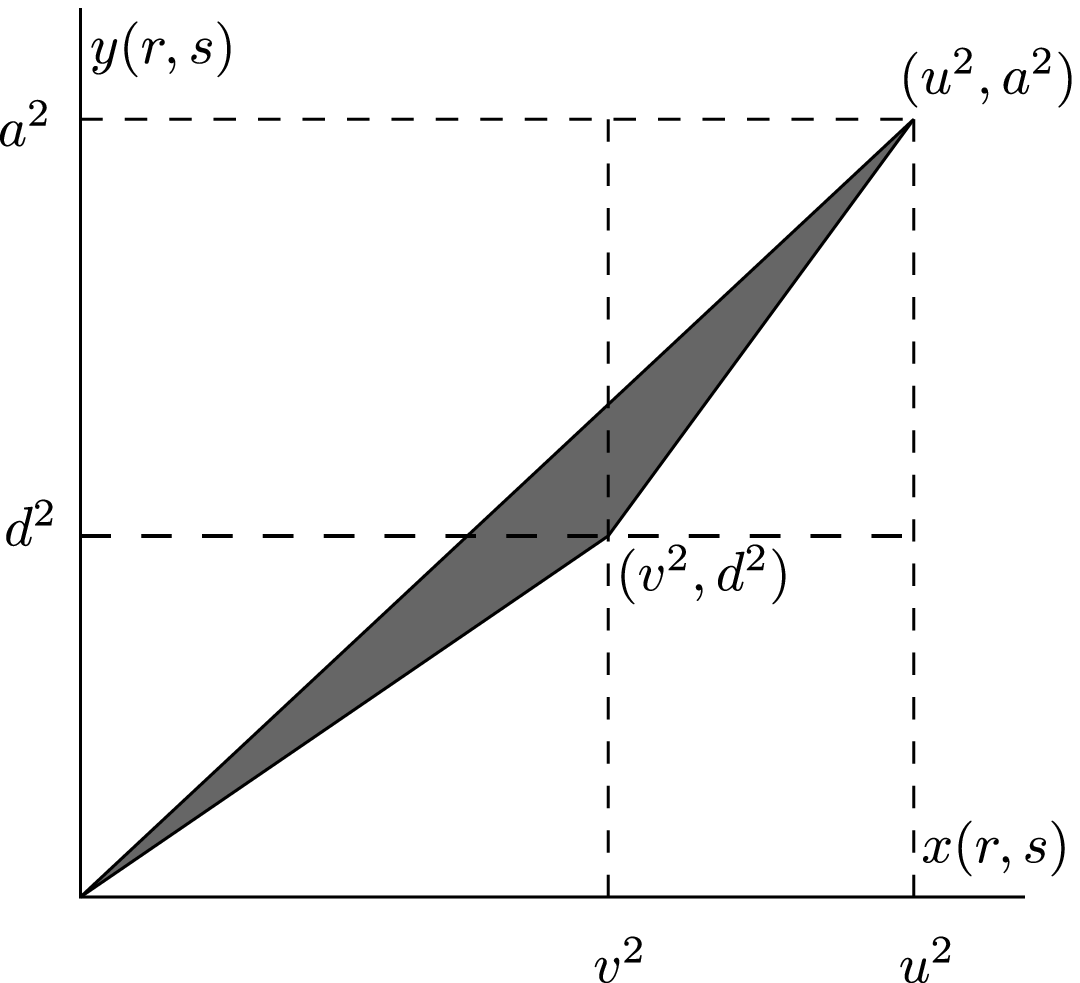}
  \includegraphics[scale=0.4]{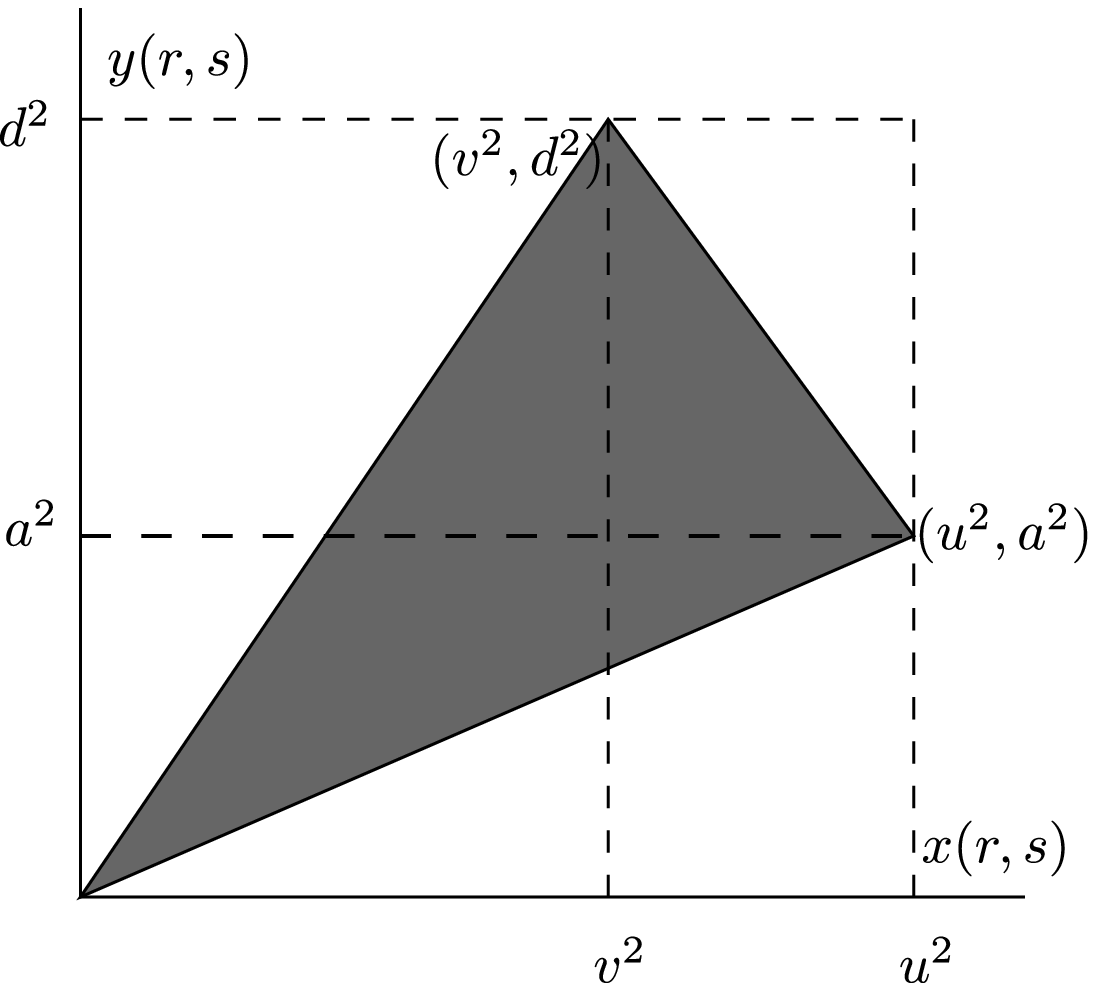} \\
  \includegraphics[scale=0.44]{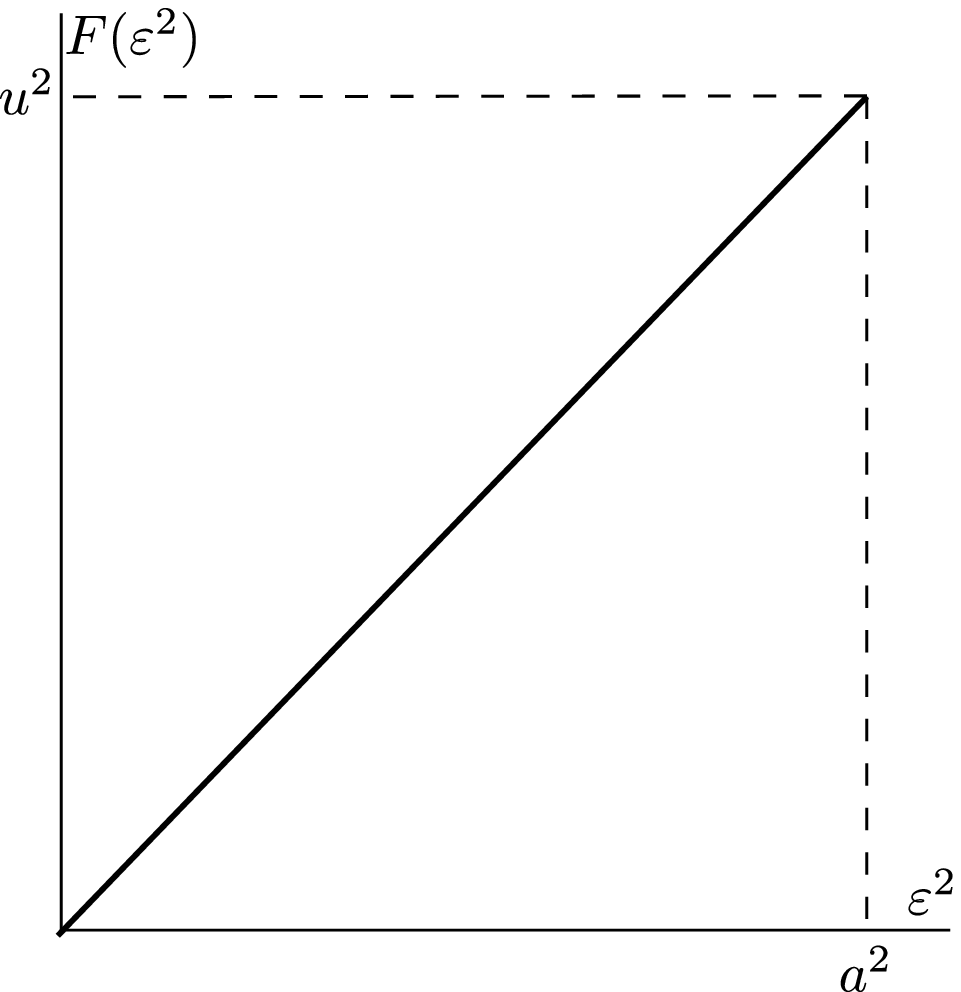}
  \includegraphics[scale=0.44]{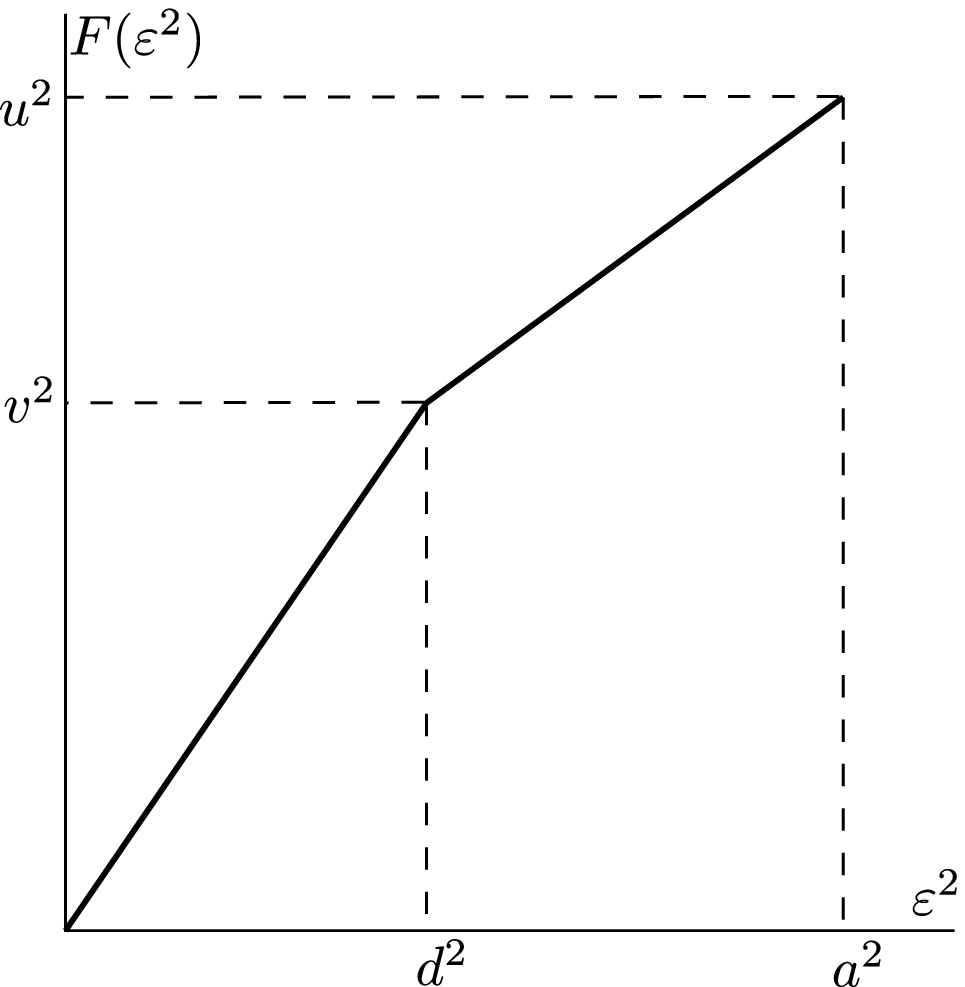}
  \includegraphics[scale=0.44]{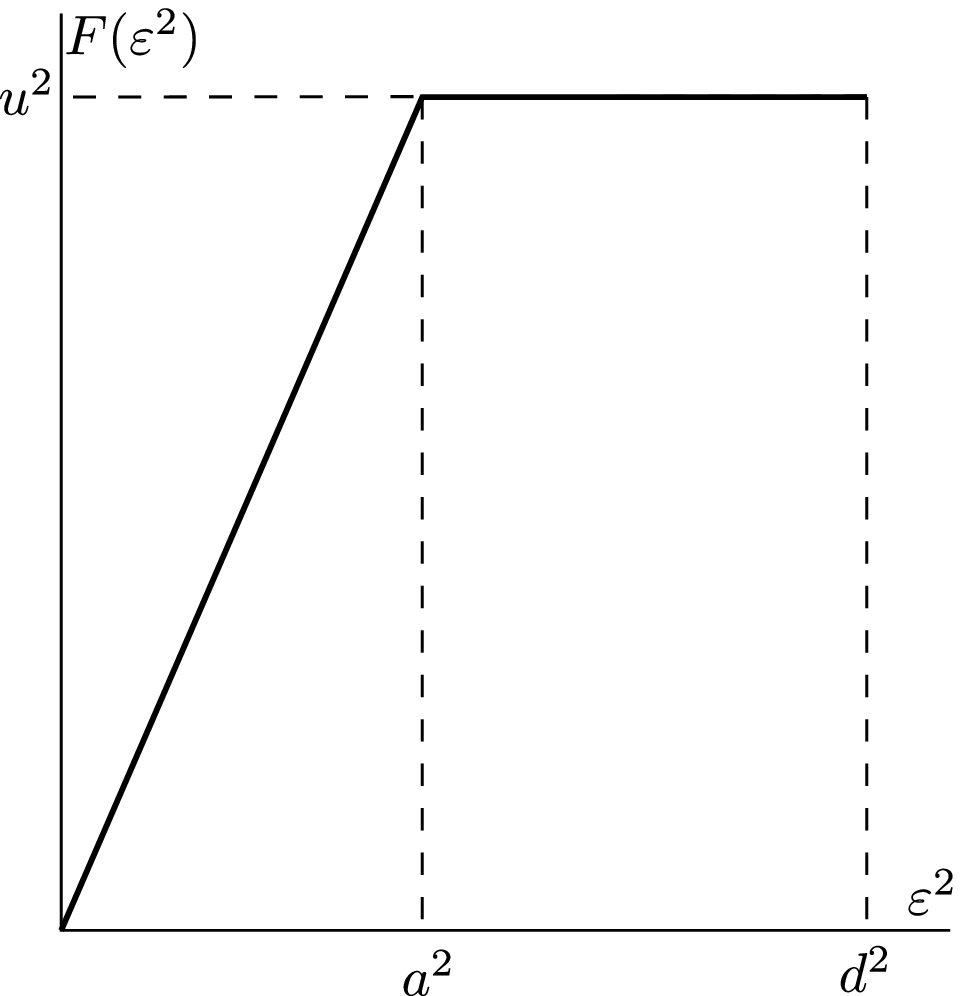}
  \caption{Top plots illustrate the set $\scal$ as defined in~(\ref{eq:scal:def}), in various
    cases. The bottom plots are the corresponding $\eps^2 \mapsto F(\eps^2)$.}
  \label{fig:1}
\end{figure}
It follows that we have two (or three) different pictures for the function
$\eps^2 \mapsto F(\eps^2)$. In particular, for $a^2 > d^2$
and $d^2 < v^2$,
\begin{align}\label{eq:F:eps2:a2:gr:d2}
  F(\eps^2) = v^2 \min \Big\{ \frac{\eps^2}{d^2},1\Big\}
  + (u^2 -v^2) \max \Big\{ 0, \frac{\eps^2-d^2}{a^2 - d^2}\Big\},
\end{align}
for $a^2 > d^2$ and $d^2 \ge v^2$, $F(\eps^2) = \eps^2$, and for $a^2
\le d^2$,
\begin{align*}
  F(\eps^2) = u^2 \min \Big\{ \frac{\eps^2}{a^2}, 1\Big\}.
\end{align*}
All the equations above are valid for $\eps^2 \in [0, \hilweight_1]$.


\section{Details of the Fourier truncation example\label{app:Four:details}}
Here we establish the claim that the bound~(\ref{eq:four:rp:bound})
holds with equality. Recall that for the (generalized) Fourier
truncation operator $\Four$, we have
\begin{align*}
  \rpp{\Four}(\eps^2) = \sup \Big\{ \sum_{k=1}^\infty \hilweight_k
  \alpha_k^2:\; \sum_{k=1}^\infty \alpha_k^2 \le 1, \; \sum_{k=1}^n
  \hilweight_k \alpha_k^2 \le \eps^2 \Big\}
\end{align*}
Let $\alpha = (t \xi, s \gamma)$, where $t,s \in \reals$, $\xi =
(\xi_1, \dots,\xi_n) \in \reals^n$, $\gamma =
(\gamma_1,\gamma_2\dots) \in \elltwo$ and $\|\xi\|_2 = 1 =
\|\gamma\|_2$. Let $u^2 = u^2(\xi) := \sum_{k=1}^n \hilweight_k \xi_k^2$ and
$v^2 = v^2(\gamma) := \sum_{k> n} \hilweight_k \gamma_k^2$. 

Let us fix $\xi$ and $\gamma$ for now and try to optimize over $t$ and $s$.
That is, we look at
\begin{align*}
  G(\eps^2; \xi, \gamma) :=\sup \Big\{ t^2 u^2 + s^2 v^2 :\; t^2 + s^2 \le 1, \; t^2 u^2 \le \eps^2\Big\}.
\end{align*}
This is an instance of the $2$-by-$2$ problem~(\ref{eq:2by2:subp}), with $a^2
= u^2$ and $d^2 = 0$. Note that our assumption that $u^2 \ge v^2$
holds in this case, for all $\xi$ and $\gamma$, because $\SEQ{\hilweight}$ is a nonincreasing sequence.
Hence, we have, for $\eps^2 \le \hilweight_1$,
\begin{align*}
  G(\eps^2; \xi, \gamma) = v^2 + (u^2 - v^2) \frac{\eps^2}{u^2} = v^2(\gamma)+
 \Big( 1- \frac{v^2(\gamma)}{u^2(\xi)} \Big) \eps^2.
\end{align*}

Now we can maximize $G(\eps^2; \xi, \gamma)$ over $\xi$ and then
$\gamma$. Note that $G$ is increasing in $u^2$. Thus, the maximum is
achieved by selecting $u^2$ to be $\sup_{\|\xi\|_2 =1} u^2(\xi) =
\hilweight_1$. Thus,
\begin{align*}
   \sup_{\xi} G(\eps^2; \xi, \gamma) = \Big(1 - \frac{\eps^2}{\hilweight_1}\Big) v^2
   (\gamma) + \eps^2.
\end{align*}
For $\eps^2 < \hilweight_1$, the above is increasing in $v^2$. Hence the
maximum is achieved by setting $v^2$ to be $\sup_{\|\gamma\|_2 = 1}
v^2(\gamma) = \hilweight_{n+1}$. Hence, for $\eps^2 \le \hilweight_1$
\begin{align}\label{eq:Four:exact:appendix}
  \rpp{\Four}(\eps^2) := \sup_{\xi, \,\gamma}G(\eps^2; \xi, \gamma)
   = \Big(1 - \frac{\hilweight_{n+1}}{\hilweight_1}\Big) \eps^2 + \hilweight_{n+1}.
\end{align}

\section{An quadratic inequality\label{app:quad:ineq}}
In this appendix, we derive an inequality
which will be used in the proof of Theorem~\ref{ThmUpperOne}. 
Consider a positive semidefinite matrix $M$ (possibly
infinite-dimensional) partitioned as 
\begin{align*}
  M = 
  \begin{pmatrix}
    A & C \\ C^T & D
  \end{pmatrix}.
\end{align*}
Assume that there exists $\rho^2 \in (0,1)$ and $\kappa^2 > 0$ such
that
\begin{align}\label{eq:quad:ineq:cond:1}
    \begin{pmatrix}
      A & C \\
      C^T & (1-\rho^2) D + \kappa^2 I \\
    \end{pmatrix} \succeq 0.
\end{align}
Let $(x,y)$ be a vector partitioned to match the block structure of $M$. Then we have the following. 
\begin{lemma}
  Under~(\ref{eq:quad:ineq:cond:1}), for all $x$ and $y$,
  \begin{align}\label{eq:quad:ineq}
    x^T A x + 2x^T C y +  y^T D y  \;\ge\; \rho^2 x^T A x -
    \frac{\kappa^2}{1- \rho^2}
    \|y\|_2^2.
  \end{align}
\end{lemma}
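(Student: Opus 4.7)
My plan is to derive the inequality directly from the assumed positive semidefiniteness condition by a judicious rescaling of variables. The key observation is that the stated hypothesis
\[
\begin{pmatrix} A & C \\ C^T & (1-\rho^2) D + \kappa^2 I \end{pmatrix} \succeq 0
\]
is equivalent to saying that for all vectors $(u,v)$ of compatible dimension,
\[
u^T A u \;+\; 2 u^T C v \;+\; (1-\rho^2)\, v^T D v \;+\; \kappa^2 \|v\|_2^2 \;\ge\; 0.
\]

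I would then specialize this inequality by choosing $u = \sqrt{1-\rho^2}\, x$ and $v = y/\sqrt{1-\rho^2}$. With this substitution, the four terms transform as follows: $u^T A u = (1-\rho^2)\, x^T A x$; the cross term $2 u^T C v = 2 x^T C y$ because the $\sqrt{1-\rho^2}$ factors cancel; $(1-\rho^2) v^T D v = y^T D y$ because the $(1-\rho^2)$ prefactor is absorbed; and $\kappa^2 \|v\|_2^2 = \frac{\kappa^2}{1-\rho^2}\|y\|_2^2$. So the PSD inequality rewrites as
\[
(1-\rho^2)\, x^T A x \;+\; 2 x^T C y \;+\; y^T D y \;+\; \frac{\kappa^2}{1-\rho^2} \|y\|_2^2 \;\ge\; 0.
\]

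Rearranging by moving the last term to the other side and adding $\rho^2 x^T A x$ to both sides yields precisely the claimed bound~\eqref{eq:quad:ineq}. There is no real obstacle here; the only slightly nontrivial step is guessing the correct scaling of $u$ and $v$, and this is essentially forced by matching the coefficient of $x^T A x$ on the left and the coefficient of $\|y\|_2^2$ on the right, while simultaneously preserving the cross term $2 x^T C y$ and the $y^T D y$ term as written. One minor point of care is that the argument goes through unchanged in the possibly infinite-dimensional setting, since the PSD hypothesis is a pointwise condition on $(u,v)$ and the rescaling is coordinate-free.
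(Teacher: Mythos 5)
Your proof is correct and is essentially identical to the paper's: both substitute the rescaled vector $(\sqrt{1-\rho^2}\,x,\; y/\sqrt{1-\rho^2})$ into the quadratic form associated with the assumed positive semidefinite matrix and then rearrange. Nothing further is needed.
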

\begin{proof}
  By assumption~(\ref{eq:quad:ineq:cond:1}), we have
  \begin{align}
    \begin{pmatrix}
      \sqrt{1-\rho^2} \, x^T & \frac{1}{\sqrt{1-\rho^2}} \, y^T
    \end{pmatrix}
    \begin{pmatrix}
      A & C \\
      C^T & (1-\rho^2) D + \kappa^2 I \\
    \end{pmatrix} 
    \begin{pmatrix}
      \sqrt{1-\rho^2}\, x \\ \frac{1}{\sqrt{1-\rho^2}}\, y
    \end{pmatrix} \; \ge \;0.
  \end{align}
\end{proof}
Writing~(\ref{eq:quad:ineq:cond:1}) as a perturbation of the original
matrix,
  \begin{align}
    \begin{pmatrix}
      A & C \\
      C^T & D \\
    \end{pmatrix} + \begin{pmatrix}
      0 & 0 \\
      0 & -\rho^2 D + \kappa^2 I \\
    \end{pmatrix}\succeq 0,
  \end{align}
we observe that a sufficient condition for~(\ref{eq:quad:ineq:cond:1})
to hold is $\rho^2 D \preceq \kappa^2 I$. That is, it is sufficient to
have
\begin{align}\label{eq:quad:ineq:cond:2}
    \rho^2 \lamax(D) \le \kappa^2.
  \end{align}
Rewriting~(\ref{eq:quad:ineq:cond:1}) differently, as
\begin{align}
  \begin{pmatrix}
      (1-\rho^2) A & 0 \\
      0 & (1-\rho^2) D \\
    \end{pmatrix} + \begin{pmatrix}
      \rho^2 A & C \\
      C^T & \kappa^2 I \\
    \end{pmatrix}\succeq 0,
\end{align}
we find another sufficient condition for~(\ref{eq:quad:ineq:cond:1}),
namely, $\rho^2 A - \kappa^{-2} C C^T \succeq 0$. In particular, it is
also sufficient to have
\begin{align}
  \kappa^{-2} \lamax(CC^T) \le \rho^2 \lamin(A).
\end{align}



\bibliographystyle{elsarticle-num}
\bibliography{approx_paper_mod}







\end{document}